\newtheorem{theorem}{Theorem}[section]
\newtheorem*{theorem*}{Theorem}
\newtheorem{lemma}[theorem]{Lemma}
\newtheorem{proposition}[theorem]{Proposition}
\newtheorem*{comment*}{Comment}
\newtheorem*{definition*}{Definition}
\newtheorem*{remark*}{Remark}
\newtheorem*{observation*}{Observation}
\newtheorem{assumption}{Assumption}
\newtheorem*{assumption*}{Assumption}
\theoremstyle{definition}
\newtheorem{definition}[theorem]{Definition}
\theoremstyle{remark}
\newtheorem{remark}[theorem]{Remark}
\newcommand{\sgn}{\mathrm{sgn}}
\title[Gaussian limit for Pfaffian point processes]{Gaussian limit for Pfaffian point processes}
\author{Kai Wang}
\address{School of Mathematical Sciences, Fudan University, Shanghai, 200433, China.}
\email{kwang@fudan.edu.cn}
\author{Mei Xu}
\address{School of Mathematical Sciences, Fudan University, Shanghai, 200433, China.}
\email{meixu21@m.fudan.edu.cn}
\thanks{K.Wang is supported by grants NSFC (12231005,12326376) and the Shanghai Technology Innovation Project (21JC1400800).}
\begin{document}

\begin{abstract}
	We prove a central limit theorem for linear statistics of  a broad class of Pfaffian point processes. As an application, we derive Gaussian limits for scaled linear statistics of step functions  in the Pfaffian  $\mathrm{Sine_4}$ and $\mathrm{Sine}_1$ processes.
\end{abstract}

\subjclass[2020]{Primary 60G55; Secondary 30B20, 30H20.}

\keywords{Pfaffian point processes; linear statistics; central limit theorem; $\mathrm{Sine_4}$-process.}

\maketitle

\section{introduction}



A determinantal point process is a random collection of points whose probability distribution can be expressed in terms of determinants. Such processes arise naturally in the study of various mathematical objects, including eigenvalues of random matrices, zeros of random analytic functions, and spanning trees on networks. Due to their wide applicability, determinantal point processes have been extensively studied; we refer to [Soshnikov2000] for a comprehensive review.

However, in several important applications—such as eigenvalue distributions in the Gaussian orthogonal ensemble (GOE), Gaussian symplectic ensemble (GSE), and the real Ginibre ensemble, as well as in random combinatorial structures like tilings—the correlation functions are instead described by Pfaffians of antisymmetric matrices. These systems are governed by Pfaffian point processes, a natural counterpart to determinantal processes.

To provide a precise definition of Pfaffian point processes, we first recall some preliminary notation.

Let $X$ be a locally compact Polish space, and let
$\rm Conf(X)$ denote the space of all locally finite configurations on $X$,  i.e.,
\begin{align*}
	{\rm Conf}(X) = \big\{ \xi = \textstyle\sum\limits_{i} \delta_{x_i} ~|    x_i\in X ~ {\rm and}~ \xi(B) < + \infty ~{\rm for ~any ~compact}~ B \subset X \big\}.
\end{align*}
This configuration space is a Polish subspace  of $\mathcal{M}(X)$, the space of Radon measures on  $X$, equipped with the vague topology. A point process on $X$ is a Borel probability measure  $\mathbb{P}$ on ${\rm Conf}(X)$. For further background, see, e.g., \cite{Daley1988, Lenard1973, Lenard1975.1, Lenard1975}.

Now, let $\mu$ be a Radon measure on $X$.
A locally integrable function $\rho_k: X^k \rightarrow [0,+\infty)$  is called the $k$-th ($k\in\mathbb{N}_+$) correlation function of the point process $\mathbb{P}$ (with respect to the reference measure $\mu$)
if for any $m\in \mathbb{N}_+,$   disjoint measurable compact subsets $A_1,\cdots,A_m$ of $X$, and   positive integers $k_1,\cdots,k_m$ with $\sum\limits_{i=1}^m k_i = k$, the following identity holds:
\begin{align*}
	\mathbb{E}_{\mathbb{P}} \Big[ \prod\limits_{i = 1}^m  \#_{A_i} (\#_{A_i}-1)\cdots (\#_{A_i} - k_i +1) \Big]
	= \int_{A_1^{k_1}\times \cdots \times A_m^{k_m}} \rho_k(x_1,\cdots,x_k) d\mu(x_1)\cdots d\mu(x_k),
\end{align*}
where $\#_A (\xi)=\xi(A) $ counts the points of $\xi$ in $A$.   For additional details, we refer to \cite{Daley1988, Forrester2010, kallenberg1986random, Soshnikov2000}.

Let $K: X\times X \rightarrow M_2(\mathbb{C})$ be a matrix-valued kernel of the form
\begin{align*}
	K(x,y) =
	\left(
	\begin{array}{cc}
		K_{1,1}(x,y) &  K_{1,2}(x,y)\\
		K_{2,1}(x,y) &  K_{2,2}(x,y)
	\end{array}
	\right),
\end{align*}
satisfying the following symmetry conditions:
\begin{align*}
	K_{1,1}(x,y)=K_{2,2}(y,x),\,K_{1,2}(x,y) = -K_{1,2}(y,x), \quad \text{ and } K_{2,1}(x,y) = -K_{2,1}(y,x).
\end{align*}
Define the matrix \begin{align*}
	Z =\left(
	\begin{array}{cc}
		0 &  1\\
		-1 &  0,
	\end{array}
	\right).
\end{align*} and consider the antisymmetric kernel
\begin{align}\label{def-Pfa-ker}
	\mathbb{K}(x,y) = ZK(x,y).
\end{align}
Then, $\mathbb{K}(x,y)^T = -\mathbb{K}(y,x)$ for any $x,y\in X.$
A point process $\mathbb{P} $ is called a Pfaffian point process if its $k$-th  correlation function $\rho_k$
(for every $k\in{\mathbb N}_+$) admits the representation
\begin{align*}
	\rho_k(x_1,\cdots,x_k)  = \mathrm{Pf}\big[\mathbb{K}(x_i,x_j)\big]_{i,j = 1}^k.
\end{align*}
For further details, see \cite{Borodin_2005, bufetov2019number, Bufetov2021, forrester2011pfaffian, Kargin_2013, Koshida_2021, Matsumoto_2013, rains2000correlation, soshnikov2003janossy}.

A prominent example of Pfaffian point processes arises as the bulk scaling limits of Pfaffian Gaussian ensembles, known as the Pfaffian sine processes \cite{Forrester2010}.
Denote the functions
$$S(x):=\frac{sin(\pi x)}{\pi x},\, IS(x)=\int_{0}^{x} S(x) dx. $$
The orthogonal sine process (or Pfaffian  $\mathrm{Sine}_1$-process), is the
Pfaffian point process on $\mathbb R$ with the matrix  kernel
\begin{align*}
	\mathbb{K}_{\mathrm{Sine}_1}(x,y)= ZK_{\mathrm{Sine}_1}(x,y),\quad
	K_{\mathrm{Sine}_1}(x,y)
	=
	\left(\begin{array}{cc}
		S(x-y) & S^{\prime}(x-y)-\frac{1}{2}sgn(x-y)  \\
		IS(x-y) & S(x-y),
	\end{array}\right)
\end{align*}
where $ Z =\left(
\begin{array}{cc}
	0 &  1\\
	-1 &  0,
\end{array}
\right)$ as before.
Similarly, the symplectic sine process (or   Pfaffian $\mathrm{Sine}_4$ process), is the Pfaffian point process on $\mathbb R$ with
the matrix   kernel
\begin{align*}
	\mathbb{K}_{\mathrm{Sine}_4}(x,y)= ZK_{\mathrm{Sine}_4}(x,y),\quad
	K_{\mathrm{Sine}_4}(x,y)
	= \frac{1}{2}
	\left(\begin{array}{cc}
		S(x-y) & S^{\prime}(x-y)  \\
		IS(x-y) & S(x-y)
	\end{array}\right).
\end{align*}

In the paper we will study the asymptotic behavior of linear statistics
\begin{align*}
	S_f(\xi) = \int_X f d\xi = \sum\limits_{x\in \xi} f(x),\quad \xi \in {\rm Conf}(X),
\end{align*}
for test functions $f$  in a scaling limit. For the Pfaffian point process $\mathbb{P}$ with the matrix kernel $\mathbb{K}$ of the form  \eqref{def-Pfa-ker},  the expectation and variance of   $S_f$
\begin{align}\label{Math-Exp-For}
	\mathbb{E}_{\mathbb{P}} [S_f] = \int_X f(x) A(x,x) d\mu(x),
\end{align}
\begin{align}\label{Var-For}
	\mathrm{Var}_{\mathbb{P}} (S_f) = \int_X |f(x)|^2 A(x,x) d\mu(x) - \int_{X^2} f(x) \overline{f(y)}\,\mathrm{det} K(x,y) d\mu(x)d\mu(y).
\end{align}

Central limit theorems (CLTs) for point processes have been established in many cases. For example, Leblé \cite{Leble2021} proved a central limit theorem for the fluctuations of linear statistics in the $\mathrm{Sine}_{\beta}$ process,  requiring test functions to be of class $C^4$ with compact support. His approach combined the DLR equations, Laplace transform techniques, and transportation methods. Subsequently,  Lambert \cite{Lambert2021} eveloped a simpler proof of a CLT for linear statistics of circular $\beta$-ensembles, , which holds at nearly microscopic scales for $C^3$ functions and consequently yields a central limit theorem for $\mathrm{Sine}_{\beta}$ processes. Related results can be found in \cite{BLS2018, Valkó2020}, among other works.

The methods for determinantal point processes differ significantly from those used. The foundation was laid by Costin and Lebowitz \cite{Costin1995}, who established a CLT for particle counts in the determinantal point process on   $\mathbb{R}$ with the kernel $\sin(\pi(x-y))/\pi(x-y).$ Soshnikov \cite{Soshnikov2000fluctuations,Soshnikov2002} later extended this work, developing a comprehensive central limit theorem for general determinantal point processes with Hermitian kernels through the moment method. Breuer and Duits \cite{Breuer&Duits2017} further applied the moment method to obtain CLTs for orthogonal polynomial ensembles and biorthogonal ensembles where the underlying biorthogonal family satisfies a recurrence relation. For Pfaffian point processes, progress has been more limited. To date, the most general result is due to Kargin \cite{Kargin_2013}, who proved a central limit theorem for Pfaffian point processes with finite-rank kernels.

In this work, we establish a central limit theorem  for a broad class of Pfaffian point processes. Our approach introduces a new tool called the finite-rank commutator property (FRCP), which quantifies how closely the kernel admits a tractable symmetry structure. Our main result extends Soshnikov's framework for  determinantal processes   to the Pfaffian setting under FRCP. Key examples include the Pfaffian $\mathrm{Sine}_4$  and   $\mathrm{Sine}_1$ processes, which satisfy this property.

To formalize FRCP, we first recall essential operator constructions. Let
$A,B,D,A^{\dag}$ denote the integral operator with  kernel functions $a(x,y), b(x,y), d(x,y), a(y,x)$.  Throughout the paper, when  $A$ is trace class, we assume   $a(x,y)$ is sufficiently smooth so that
$$\int_{X^k}
a(x_1,x_2) a(x_2,x_3)\cdots a(x_k,x_1) \,d\mu(x_1)\cdots d\mu(x_k) = \mathrm{Tr} (A ^k )=\mathrm{Tr} (A ^{{\dag}k} ),\quad k\in\mathbb{N}_+.$$
For $f,g\in L^2(X,\mu)$, we define the rank-1 operator  $f\otimes g$  by
\begin{align*}
	(f\otimes g)(h) =  \langle h,g\rangle_{L^2(X,\mu)} f,\quad \mathrm{for}\quad h\in L^2(X,\mu).
\end{align*}
Then the definition of FRCP is as follows.
\begin{definition} Let $\mathbb{P}$ be a Pfaffian point process with the matrix-valued kernel $$ \mathbb{K}(x,y)=ZK(x,y), K(x,y)= \lambda \left({\begin{array}{cc}a(x,y) &d(x,y)\\b(x,y)&a(y,x)\end{array}} \right)  (\lambda=\frac{1}{2}\, \mathrm{or}\, 1)$$ defined on a domain $X$ with respect to the measure $\mu$.  We say $\mathbb{P}$ has the finite-rank commutator property (FRCP) if there exist a constant
	$N$
	and functions
	$$f^{(i)},g^{(i)},h^{(i)},e^{(i)}\in L^2(X,\mu)(i=1,\cdots,N),$$
	such that
	\begin{align*}
		&A^{\dag} B -B  A = \sum\limits_{i=1}^{N}  f^{(i)}\otimes g^{(i)},\\
		&D B -(\alpha A^{2}+\beta A)  = \sum\limits_{i=1}^{N} h^{(i)}\otimes e^{(i)},
	\end{align*}
	where $\alpha,\beta\in\mathbb{C}$ satisfy $\alpha +\beta =\begin{cases}
		1, &  \mathrm{if}\, \lambda=\frac{1}{2};\\
		0, &  \mathrm{if}\,\lambda=1,
	\end{cases} $
	The tuple  $\{N,f^{(i)},g^{(i)},h^{(i)},e^{(i)},\alpha,\beta\}$ parametrizes the FRCP for $\mathbb{P}$.
\end{definition}
As demonstrated in Section~\ref{section-CLT-for-sine-beta}, both the   both the Paffian $\mathrm{Sine}_4$-process and the  Paffian $\mathrm{Sine}_1$-process have FRCP.

Building on this framework, we now present our central limit theorem for Pfaffian point processes with FRCP.

\begin{theorem}\label{Th2} Fixed an integer $N$.  Consider a family $\{\mathbb{P}_L\}_{L\geqslant 0}$ be a family of Pfaffian point processes having FRCP  characterized by the data  $\{N,f^{(i)}_{L},g^{(i)}_{L},h^{(i)}_{L},e^{(i)}_{L},\alpha_L,\beta_L\}$ with the matrix-valued kernel
	$$\mathbb{K}_L(x,y)=ZK_L(x,y),K_L(x,y)= \lambda_L \left({\begin{array}{cc}a_L(x,y) &d_L(x,y)\\b_L(x,y)&a_L(y,x)\end{array}} \right) (\lambda_L= \frac{1}{2} \,\,\text{or}\, \,1 ),$$
	defined on the domain $X_L$ with respect to the measure $\mu_L$. Under the following conditions:
	\begin{itemize}\label{assump_of_th3.1_2}\label{equ-in-assup4}
		\item[(i)] $\mathrm{Var}_{\mathbb{P}_L} (\#_{X_L}) \rightarrow +\infty$ as $L\rightarrow +\infty;$
		\item[(ii)]  Operators $A_L,B_L,D_L,A^{\dag}_L$ are bounded  with   $\sup\limits_L (\|A_L\|+\|A^{\dag}_L\|+|\alpha_L|+|\beta_L|)<+\infty;$
		\item[(iii)] $A_L$ is trace-class and satisfies $\|A_L- A_L^2\|_1 = o\big(\mathrm{Var}_{\mathbb{P}_L}(\#_{X_L})\big)^{\delta} $ for some $\delta>0;$
		\item[(iv)]  for all $i,j = 1,\cdots,N ,$ and $m,n\in \mathbb{Z}_{\geqslant 0},$ the inner products
		$$
		\langle D_L A_L^{{\dag}\, m} f^{(i)}_{L},  {A_L}^{ n} g^{(j)}_{L}\rangle,
		\langle D_L A_L^{{\dag}\, m} f^{(i)}_{L},  {A_L}^{ n} e^{(j)}_{L}\rangle,
		\langle h^{(i)}_{L}, {A_t}^{ n} g^{(j)}_{L}\rangle,
		\langle h^{(i)}_{L},  {A_t}^{ n} e^{(j)}_{L}\rangle=  o\big(\mathrm{Var}_{\mathbb{P}_L}(\#_{X_L})\big),
		$$
	\end{itemize}	
	then the normalized counting statistic
	\begin{align*}
		\frac{\#_{X_L}- \mathbb{E}_{\mathbb{P}_L}[\#_{X_L}] }{\sqrt{\mathrm{Var}_{\mathbb{P}_L}(\#_{X_L})}}
	\end{align*}				
	converges in distribution to   $N(0,1)$ as $L \rightarrow +\infty$.
\end{theorem}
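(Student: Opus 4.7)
The plan is to adapt the method of cumulants, in the spirit of Soshnikov's CLT for determinantal point processes, to the Pfaffian setting. Write $\sigma_L^2 := \mathrm{Var}_{\mathbb{P}_L}(\#_{X_L})$ and let $\kappa_k^L$ denote the $k$-th cumulant of $\#_{X_L}$. Since $\kappa_2^L = \sigma_L^2$ and $\sigma_L \to \infty$ by hypothesis (i), it suffices (via the method of moments together with a Carleman-type bound on cumulant growth) to establish
\[
\frac{\kappa_k^L}{\sigma_L^k} \longrightarrow 0 \qquad \text{for every integer } k \geqslant 3.
\]
The remaining task is therefore the control of $\kappa_k^L$ for $k \geqslant 3$.

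\textbf{Pfaffian cumulant expansion.} The first step is to derive a workable cumulant formula. Starting from the factorial moment expansion and the Pfaffian correlation functions $\rho_k = \mathrm{Pf}[\mathbb{K}_L(x_i,x_j)]_{i,j=1}^k$, I would expand the Pfaffian as a signed sum over perfect matchings of $\{1,\dots,k\}$ and then pass to cumulants via the standard inclusion--exclusion relating joint factorial moments to connected correlations. After integration against $\mu_L$, each surviving term becomes a ``cyclic trace'' of the $2\times 2$ matrix kernel entries, i.e., a quantity of the form $\mathrm{Tr}(T_1 T_2 \cdots T_m)$ with each $T_j \in \{A_L, A_L^\dagger, B_L, D_L\}$ and $m \leqslant k$, the sequence recording which entries of the matrix kernel appear along a cycle produced by the cumulant/matching combinatorics.

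\textbf{Reduction to powers of $A_L$ via FRCP.} The FRCP identities
\begin{align*}
A_L^\dagger B_L - B_L A_L &= \sum_{i=1}^N f_L^{(i)} \otimes g_L^{(i)}, &
D_L B_L - (\alpha_L A_L^2 + \beta_L A_L) &= \sum_{i=1}^N h_L^{(i)} \otimes e_L^{(i)},
\end{align*}
are exactly the substitution/commutation rules required to eliminate $B_L$ and $D_L$ from these cyclic traces in favour of $A_L$ and $A_L^\dagger$. Using the first identity one pushes every $B_L$ past an adjacent $A_L$, converting it into a factor $A_L^\dagger B_L$ plus a rank-$N$ correction; using the second, every block $D_L B_L$ can be replaced by $\alpha_L A_L^2 + \beta_L A_L$ plus a rank-$N$ correction. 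Each correction is a term of the form $\mathrm{Tr}(T \cdot f_L^{(i)} \otimes g_L^{(i)}) = \langle T f_L^{(i)}, g_L^{(i)}\rangle$ with $T$ a product of $A_L$'s and $A_L^\dagger$'s possibly sandwiching a $D_L$, which is one of the inner products appearing in hypothesis (iv). Iterating $O_k(1)$ times, the cyclic traces collapse to a linear combination, whose coefficients are polynomial in $\alpha_L,\beta_L$ of bounded degree, of traces of powers of $A_L$ alone, plus $O_{k,N}(1)$ error inner products, each of which is $o(\sigma_L^2)$ by (iv).

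\textbf{Bounding the main term and conclusion.} What remains is a sum of trace expressions of the type $\mathrm{Tr}(A_L^j - A_L^{j+1}) = \mathrm{Tr}(A_L^{j-1}(A_L - A_L^2))$ for $j \leqslant k-1$, which by (ii)--(iii) obey
\[
|\mathrm{Tr}(A_L^j - A_L^{j+1})| \leqslant \|A_L\|^{j-1}\,\|A_L - A_L^2\|_1 = o(\sigma_L^{2\delta}).
\]
Combined with the $o(\sigma_L^2)$ bound on the finitely many FRCP error inner products, this yields $|\kappa_k^L| = o(\sigma_L^2)$ for every $k \geqslant 3$, hence $|\kappa_k^L|/\sigma_L^k = o(\sigma_L^{2-k}) \to 0$, as required. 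The main obstacle I anticipate is the combinatorial bookkeeping of the Pfaffian-to-cumulant expansion: a Pfaffian over $2k$ points produces many more pattern classes than the cyclic contractions familiar from the determinantal case, and one must verify, after the cumulant inclusion--exclusion, that every surviving pattern reduces to a single cyclic trace in $\{A_L, A_L^\dagger, B_L, D_L\}$ to which FRCP applies, and that the corrections produced by each FRCP step are always of the precise inner-product form prescribed in hypothesis (iv), rather than more complicated objects that (iv) does not control.
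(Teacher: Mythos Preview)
Your overall architecture is the paper's: cumulants of $\#_{X_L}$ are sums of cyclic traces $\tfrac12\int\mathrm{Tr}\bigl(K_L(x_1,x_2)\cdots K_L(x_k,x_1)\bigr)$, FRCP reduces each such trace to a polynomial in $\mathrm{Tr}(A_L^j)$ plus rank-one errors controlled by (iv), and the trace-class bound (iii) handles the rest. Two points need sharpening.

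The minor one is the final bookkeeping. A length-$k$ cyclic trace contains up to $m\le k/2$ blocks of the shape $D_LA_L^{\dagger\,n}B_L$; replacing each by (polynomial in $A_L$) $+$ (finite-rank) and multiplying out, the cross terms are traces $\mathrm{Tr}(Q_1\cdots Q_l)$ with $l\le m$ rank-one factors, i.e.\ \emph{products} of $l$ inner products from (iv), hence only $o(\sigma_L^{2l})$ and possibly as large as $o(\sigma_L^k)$. Moreover the telescoped cumulant formula always keeps the $k=2$ contribution $V_1-V_2=\mathrm{Var}_{\mathbb P_L}(\#_{X_L})$ exactly, so your claim $|\kappa_k^L|=o(\sigma_L^2)$ is false in general; the achievable (and sufficient) bound is $|\kappa_n^L|=O(\sigma_L^2)+o(\sigma_L^{2\delta})+o(\sigma_L^n)=o(\sigma_L^n)$ for $n>\max(2,2\delta)$.

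The substantive gap is the passage from ``a linear combination of $\mathrm{Tr}(A_L^j)$'' to ``a sum of $\mathrm{Tr}(A_L^j-A_L^{j+1})$''. After FRCP the main part of the full $k$-fold cyclic trace is
\[
\lambda_L^k\sum_{m=0}^{\lfloor k/2\rfloor}\binom{k}{2m}\,\mathrm{Tr}\bigl(A_L^{k-2m}(\alpha_L A_L^2+\beta_L A_L)^m\bigr),
\]
and the coefficient of the top piece $\mathrm{Tr}(A_L^k)$ is $\lambda_L^k\sum_m\binom{k}{2m}(\alpha_L+\beta_L)^m=\tfrac12\lambda_L^k\bigl[(1+\sqrt{\alpha_L+\beta_L})^k+(1-\sqrt{\alpha_L+\beta_L})^k\bigr]$. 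This equals $\lambda_L$, \emph{independently of $k$}, precisely because of the FRCP constraint $\alpha_L+\beta_L=1$ when $\lambda_L=\tfrac12$ and $\alpha_L+\beta_L=0$ when $\lambda_L=1$. Only with this $k$-independent leading coefficient does the Stirling recursion $v(n,k)=v(n-1,k-1)+kv(n-1,k)$ collapse $c_n$ into a sum over differences $V_k-V_{k-1}$ in which the huge $\lambda_L\mathrm{Tr}(A_L)\sim\mathbb E[\#_{X_L}]$ pieces cancel. If the leading coefficients varied with $k$ you would be left with a residual of order $\mathrm{Tr}(A_L)$, generically much larger than any power of the variance, and the argument would collapse. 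This computation---not the Pfaffian-to-cumulant combinatorics you flagged---is the crux of the proof and is where the otherwise mysterious constraint on $\alpha_L+\beta_L$ enters.
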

We apply Theorem~\ref{Th2} to establish CLTs for both  the Pfaffian  $\mathrm{Sine}_1$-process and the  Pfaffian $\mathrm{Sine}_4$-process. Furthermore, by a similar argument we extend these results to step function statistics .

\begin{theorem}\label{Th-sine-beta-step-func} Fixed $\beta =1, 4$. Consider a step function  $\varphi=\sum\limits_{i = 1}^N\lambda_i\chi_{(a_i,b_i)},$
	where $\lambda_i\in\mathbb{R}\setminus \{0\}$, $(a_i,b_i)$ are disjoint intervals. Define the scaled version  $\varphi_L(x) = \varphi(\frac{x}{L})$.
	Then the normalized statistic
	\begin{align*}
		\frac{S_{\varphi_L} - \mathbb{E}_{\mathrm{Sine}_\beta} [S_{\varphi_L}] }{\sqrt{\mathrm{Var}_{\mathrm{Sine}_\beta }  (S_{\varphi_L})}}
	\end{align*}				
	converges in distribution to  $N(0,1)$ as $L \rightarrow +\infty$.
\end{theorem}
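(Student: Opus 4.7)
The plan is to adapt the moment-based argument behind Theorem~\ref{Th2} by replacing the indicator $\chi_{X_L}$ with the step function $\varphi_L$. As a first step I would reduce the conclusion to the cumulant estimate
$$
\frac{\kappa_n(S_{\varphi_L})}{\mathrm{Var}_{\mathrm{Sine}_\beta}(S_{\varphi_L})^{n/2}} \longrightarrow 0 \quad \text{as } L \to \infty, \qquad n \geqslant 3,
$$
together with $\mathrm{Var}_{\mathrm{Sine}_\beta}(S_{\varphi_L}) \to \infty$. For the sine kernels, a direct evaluation of \eqref{Var-For} gives the classical logarithmic growth $\mathrm{Var}_{\mathrm{Sine}_\beta}(S_{\varphi_L}) \asymp \log L$, which supplies the analogue of hypothesis~(i) in Theorem~\ref{Th2}. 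Since the intervals $(a_i, b_i)$ are disjoint, $\varphi_L^m = \sum_i \lambda_i^m \chi_{(La_i, Lb_i)}$, so by multilinearity the higher cumulants of $S_{\varphi_L}$ decompose into joint cumulants of the individual counts $\#_{(La_i, Lb_i)}$.

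The Pfaffian cumulant expansion underlying the proof of Theorem~\ref{Th2} rewrites these joint cumulants as traces of products of $A, B, D$ interleaved with the bounded multiplication operator $M_{\varphi_L}$ on $L^2(\mathbb{R})$, or equivalently with the weighted operators $\widetilde A_L := M_{\varphi_L^{1/2}} A M_{\varphi_L^{1/2}}$ and their analogues $\widetilde B_L, \widetilde D_L$ (using a suitable complex square root on the negative pieces of $\varphi_L$). The central algebraic input is the FRCP, which in the proof of Theorem~\ref{Th2} allowed commuting $A^{\dag}, B, D$ past each other modulo finitely many rank-one errors. To apply the same mechanism here, I would verify that $\widetilde A_L, \widetilde B_L, \widetilde D_L$ still satisfy an FRCP whose rank is bounded uniformly in $L$ by a constant depending only on $N$: each of the $2N$ endpoints $La_i, Lb_i$ should contribute only a bounded number of rank-one corrections to the commutators, so that the additional rank is linear in $N$ and independent of $L$. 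The verification reduces to the explicit algebraic manipulation already carried out in Section~\ref{section-CLT-for-sine-beta} for the one-interval case.

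Once the modified FRCP is in place, conditions~(ii)--(iv) of Theorem~\ref{Th2} translate mutatis mutandis: operator norms are controlled by $\max_i |\lambda_i|$; the trace-norm estimate $\|\widetilde A_L - \widetilde A_L^2\|_1 = O(\log L)$ follows by summing the single-interval bound over the $N$ subintervals; and the inner products appearing in~(iv) grow at most polynomially in $\log L$. Feeding these estimates into the diagrammatic expansion for the cumulants then yields $\kappa_n(S_{\varphi_L}) = o\bigl((\log L)^{n/2}\bigr)$ for $n \geqslant 3$, which combined with the variance asymptotics closes the argument.

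The principal obstacle I foresee is the handling of the signed weights $\lambda_i$: in the counting-statistic setting, $M_{\chi_{X_L}}$ is an orthogonal projection, whereas $M_{\varphi_L}$ is merely a bounded self-adjoint operator, so the clean decomposition of commutators relied upon in Theorem~\ref{Th2} must be redone without invoking idempotence. Once the FRCP rank is tracked endpoint-by-endpoint---treating each $\lambda_i \chi_{(La_i, Lb_i)}$ as contributing a fixed number of rank-one boundary corrections---this issue should resolve, giving a uniform-in-$L$ rank bound and the desired CLT.
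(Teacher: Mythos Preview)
Your proposal has a genuine gap at the point where you claim $\|\widetilde A_L - \widetilde A_L^2\|_1 = O(\log L)$ ``follows by summing the single-interval bound over the $N$ subintervals.'' This estimate is false as soon as some $\lambda_i \notin \{0,1\}$. Indeed, writing $P_i = \chi_{I_L^{(i)}}$ and restricting to the diagonal block $i=j=k$, the operator $\widetilde A_L - \widetilde A_L^2$ contains the piece $\lambda_i P_i A P_i - \lambda_i^2 (P_i A P_i)^2$, whose trace equals $(\lambda_i - \lambda_i^2)\,\mathrm{Tr}(P_i A P_i) + O(\log L) \asymp (\lambda_i - \lambda_i^2) L$. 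Thus condition~(iii) of Theorem~\ref{Th2} cannot be verified for the conjugated operator $\widetilde A_L$, and the telescoping argument $V_k - V_{k-1} = o(\cdot)$ collapses: $\widetilde A_L$ is not approximately idempotent. The idempotence of $M_{\chi_{X_L}}$ is not incidental to Theorem~\ref{Th2}; it is precisely what makes $\mathrm{Tr}(A_L^k)$ stabilise in $k$, and this is lost for signed, non-unit weights.

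The paper's fix is to avoid conjugation altogether and keep the individual projections $P_i$ explicit. Expanding $\varphi_L^{l_s}(x_s) = \sum_{i_s} \lambda_{i_s}^{l_s} \chi_{I_L^{(i_s)}}(x_s)$ in Proposition~\ref{lemma-cumulants-Pfaffian} produces a sum over multi-indices $(i_1,\ldots,i_k)$. The diagonal contribution (all $i_s$ equal to some $i$) is exactly $\lambda_i^n c_n(\#_{I_L^{(i)}})$, to which Theorem~\ref{Th2} applies directly since $P_i$ \emph{is} a projection. The off-diagonal contribution is controlled not by near-idempotence but by a separate input: the Hilbert--Schmidt smallness of the cross blocks $\|P_i A_L P_j\|_2^2 = \mathrm{Tr}(P_i A_L P_j A_L^* P_i) = O(\log L)$ for $i\neq j$, which forces $\mathrm{Tr}(P_{i_1} A_L \cdots P_{i_k} A_L P_{i_1}) = O((\log L)^{\delta})$ whenever two indices differ. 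The FRCP must likewise be upgraded to the interleaved form $A_L^\dag P_i B_L - B_L P_i A_L$ and $D_L P_i B_L - (\alpha A_L P_i A_L + \beta A_L P_i)$ being finite rank for each $i$, with inner-product bounds involving arbitrary strings $P_{j_1} A_L P_{j_2} \cdots$ rather than powers of a single $\widetilde A_L$. Your endpoint-counting heuristic for the rank is correct, but it needs to be applied at this interleaved level, not to the conjugated operators.
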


The remainder of this paper is structured as follows: Section~\ref{section-pf-main-results} proves the main theorem using moment analysis techniques, while Section~\ref{section-CLT-for-sine-beta} establishes the finite-rank commutator property for both the Pfaffian  $\mathrm{Sine}_4$  and  $\mathrm{Sine}_1$ processes and subsequently extends the central limit theorem to step functions by means of trace class operator estimates.

\section{proof of theorem \ref{Th2}}\label{section-pf-main-results}

We will prove Theorem \ref{Th2} by the moment method. Recall that for a real-valued random variable $\eta$ with all finite moments, the cumulants $c_n(\eta),$ $n \in \mathbb{N}_+,$ are defined through the Taylor coefficients of the logarithm of  characteristic function:
\begin{align}\label{cumulants}
	\log \mathbb{E} \big[\exp (iz\eta )\big] =
         \sum\limits_{n=1}^{\infty} c_n(\eta) \frac{(iz)^n}{n!}.
\end{align}
From \cite[Section 2.4]{Lu}, the $N$-th moments of $\eta$ can be expressed with cumulants as
	\begin{align}\label{momentsbycumulants}
		\mathbb{E}[\eta^N] = \sum_{\substack{
				\mathrm{over}~\mathrm{partitions}\\
				\mathcal{P}=\{P_1,\cdots,P_q\}~\mathrm{of}~[N]
			}
		} c_{|P_1|} (\eta) \,c_{|P_2|}(\eta) \,\cdots \,c_{|P_q|}(\eta).
\end{align}
Here and throughout the paper, we will denote  $|\cdot|$ of a set as the number of its elements,  and define $[N] = \{1,\cdots,N\} $.
We will use the following Lemma~\ref{CLT-and-cn}, a standard result from the convergence of cumulants to the convergence in distribution for the standard normal law $N(0,1)$, see, e.g. \cite[Theorem~3.3.26]{Du}, \cite[Corollary to Theorem~7.3.3]{Lu} and \cite[Lemma~3]{Soshnikov2002}.
\begin{lemma}\label{CLT-and-cn}
	Let $\{\eta_L\}_{L\geqslant0}$ be a family of real-valued random variables such that $c_1(\eta_L) = 0,$ $c_2(\eta_L) = 1$ for all $L>0,$ and $c_n(\eta_L) \rightarrow 0$ as $L \rightarrow +\infty$ when $n$ is sufficiently large.
	Then    $\eta_L$ converges in distribution to $N(0,1)$ as $L \rightarrow +\infty$.
\end{lemma}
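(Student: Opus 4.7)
The plan is to apply the classical method of moments/cumulants for convergence to a Gaussian: since $N(0,1)$ has cumulants $c_1 = 0$, $c_2 = 1$, and $c_n = 0$ for $n \geq 3$, and since the standard normal distribution is uniquely determined by its moments (Carleman's criterion applies because its moments grow like $(N-1)!!$), it suffices to prove that $\mathbb{E}[\eta_L^N] \to \mathbb{E}[Z^N]$ as $L \to +\infty$ for every fixed $N \in \mathbb{N}_+$, where $Z \sim N(0,1)$. Convergence in distribution then follows from the method of moments.

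First I fix $N$ and expand $\mathbb{E}[\eta_L^N]$ via the moment-cumulant identity \eqref{momentsbycumulants} as a finite sum, indexed by partitions $\mathcal{P} = \{P_1,\dots,P_q\}$ of $[N]$, of products $\prod_{j=1}^{q} c_{|P_j|}(\eta_L)$. I then classify partitions by their block sizes. Any partition containing a block of size $1$ contributes $0$ because $c_1(\eta_L) = 0$. The pair partitions (those with every $|P_j| = 2$, which exist only if $N$ is even) contribute $c_2(\eta_L)^{N/2} = 1$ apiece, and they number $(N-1)!! = \mathbb{E}[Z^N]$. Any surviving partition must then have all blocks of size $\geq 2$ and at least one block of size $\geq 3$.

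Second, I argue that all such non-pair contributions vanish in the limit. Interpreting the hypothesis in the strong sense used in the cited references (Durrett, Lukacs, Soshnikov), i.e., $c_n(\eta_L) \to 0$ as $L \to +\infty$ for every fixed $n \geq 3$, each non-pair partition yields a product containing a factor $c_{n}(\eta_L)$ with $n \geq 3$ that tends to zero, while every other factor equals $c_2(\eta_L)=1$ or $c_{m}(\eta_L)$ for some $m \geq 3$ which is bounded for large $L$. Because the sum in \eqref{momentsbycumulants} has a finite number of terms (depending only on $N$), I can take the limit term by term and conclude $\mathbb{E}[\eta_L^N] \to (N-1)!! \cdot \mathbf{1}_{\{N\text{ even}\}} = \mathbb{E}[Z^N]$. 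Applying the method of moments yields $\eta_L \Rightarrow N(0,1)$.

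The main obstacle is interpretive rather than computational: one must extract from the hypothesis ``$c_n(\eta_L) \to 0$ when $n$ is sufficiently large'' enough control to kill \emph{every} non-pair partition, including those using intermediate cumulants $c_3, c_4, \dots$ that may fall below the ``sufficiently large'' threshold. In practice this is resolved by reading the hypothesis as asserting vanishing of $c_n(\eta_L)$ for every fixed $n \geq 3$ (or, equivalently in the applications of interest, establishing a uniform bound together with eventual vanishing and then appealing to a pre-existing control of the low-order cumulants from the normalization $c_2 \equiv 1$). Once this reading is adopted, the combinatorics of partitions and the exchange of limit with the finite partition sum are entirely routine.
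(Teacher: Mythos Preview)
The paper does not prove this lemma; it is cited as a standard result with references to Durrett, Lukacs, and Soshnikov. Your route through the moment--cumulant identity \eqref{momentsbycumulants} and the method of moments is precisely the standard argument, and your identification of the pair-partition contribution as $(N-1)!!$ for even $N$ (and $0$ for odd $N$) is correct.

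The gap you flag is genuine, however, and your proposed resolution does not close it. Reading the hypothesis literally, only $c_n\to 0$ for $n\ge n_0$ is assumed, yet a non-pair partition of $[N]$ can involve blocks of sizes $3,\dots,n_0-1$, whose cumulants are not directly controlled. Reinterpreting the hypothesis as ``$c_n\to 0$ for every $n\ge 3$'' proves a weaker statement than the one asserted, and the paper actually needs the literal version: in the proof of Theorem~\ref{Th2} the bound $c_n(\#_{X_L})=o\big(\mathrm{Var}_{\mathbb{P}_L}(\#_{X_L})\big)^{n/2}$ is obtained only for $n>\max\{2\delta,3\}$, which exceeds $3$ whenever $\delta>3/2$. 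The missing ingredient---supplied in Soshnikov's cited lemma---is a separate argument that $c_1=0$, $c_2=1$, and $c_n\to 0$ for $n\ge n_0$ already force $c_k\to 0$ for all $3\le k<n_0$; only after that step does your partition computation finish the proof.
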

According to Lemma~\ref{CLT-and-cn}, we are going to show that the $n$-th cumulant of the normalized linear statistics
\begin{align*}
	\frac{\#_{X_L}- \mathbb{E}_{\mathbb{P}_L}[\#_{X_L}] }{\sqrt{\mathrm{Var}_{\mathbb{P}_L}(\#_{X_L})}}
\end{align*}							
converges to zero as $L\rightarrow \infty$ when $n$ is sufficiently large.
It suffices to show that, there exists $n_0\in\mathbb{N}_+$ such that
\begin{align}\label{estimate-cn-TH2}
	\mathrm{for}\quad n\geqslant n_0,\quad c_n(\#_{X_L})
	=o\big(\mathrm{Var}_{\mathbb{P}_L}(\#_{X_L})\big)^{\frac{n}{2}}\quad\mathrm{as}\quad L\rightarrow +\infty.
\end{align}
We use the following proposition to give an expression for the cumulant of the linear statistics in the case of Pfaffian point process.
By the way, the corresponding result in the case of determinantal is established by Soshnikov in \cite{Soshnikov2000CLT}.
\begin{proposition}\label{lemma-cumulants-Pfaffian}
	Let $\mathbb{P}$ be a Pfaffian point process on a locally compact Polish space $X$ with the matrix-valued kernel function $\mathbb{K}(x,y)=ZK(x,y)$ with respect to the reference Radon measure $\mu$. Then for any bounded measurable compactly supported function $f:X\to\mathbb{R}$, the $n$-th $(n\in \mathbb{N}_+)$ cumulant of the linear statistic $S_f$ can be expressed by
	\begin{align*}
		c_n(S_f)
		&=\frac{1}{2} \sum_{\substack{(l_1,\cdots,l_k)\in\mathbb{N}_+^k,\\
				l_1+\cdots+l_k = n }} (-1)^{k-1} \frac{n!}{l_1!\cdots l_k!}\cdot\frac{1}{k} \int_{X^k} f^{l_1}(x_1)f^{l_2}(x_2)\cdots f^{l_k}(x_k)\\
		&\qquad\qquad\qquad\cdot\mathrm{Tr}\big(K(x_1,x_2)K(x_2,x_3)\cdots K(x_k,x_1)\big)d\mu(x_1)\cdots d\mu(x_{k}),
	\end{align*}
	here we default $m+1=1$.
\end{proposition}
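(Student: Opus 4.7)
My approach mirrors Soshnikov's Fredholm determinant argument for determinantal point processes, adapted to the Pfaffian setting through a classical Pfaffian-to-determinant reduction. I will first obtain a closed form for the generating functional, then extract cumulants from the logarithmic expansion.

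The first step is to establish the identity
$$\mathbb{E}_{\mathbb{P}}\Big[\prod_{x\in\xi}(1+\phi(x))\Big]=\sqrt{\det\bigl(I+\phi\,\mathcal{K}\bigr)}$$
for bounded compactly supported $\phi:X\to\mathbb{R}$, where $\mathcal{K}$ denotes the integral operator on $L^2(X,\mu)\otimes\mathbb{C}^2$ with $2\times 2$ matrix kernel $K(x,y)$. Starting from the correlation-function expansion
$$\mathbb{E}_{\mathbb{P}}\Big[\prod_{x\in\xi}(1+\phi(x))\Big]=\sum_{n\geqslant 0}\frac{1}{n!}\int_{X^n}\mathrm{Pf}\bigl[\mathbb{K}(x_i,x_j)\bigr]_{i,j=1}^n\prod_{i=1}^n\phi(x_i)\,d\mu(x_i),$$
I will invoke the identity $\mathrm{Pf}(M)^2=\det(M)$ for antisymmetric matrices, combined with $\det(ZK\,Z^{-1})=\det(K)$ and $\mathbb{K}=ZK$, to convert the Fredholm Pfaffian of $\mathbb{K}$ into the Fredholm determinant of $\mathcal{K}$. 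The sign of the square root is pinned down by continuity at $\phi=0$.

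Next I set $\phi=e^{itf}-1$ and apply the Plemelj--Smithies expansion of $\mathrm{Tr}\log$ to obtain
$$\log\mathbb{E}_{\mathbb{P}}[e^{itS_f}]=\frac{1}{2}\,\mathrm{Tr}\log\bigl(I+(e^{itf}-1)\mathcal{K}\bigr)=\frac{1}{2}\sum_{k=1}^\infty\frac{(-1)^{k-1}}{k}\,\mathrm{Tr}\bigl((e^{itf}-1)\mathcal{K}\bigr)^k.$$
Since $\mathcal{K}$ has $2\times 2$ matrix kernel $K(x,y)$, the usual trace-of-iterated-kernel identity gives
$$\mathrm{Tr}\bigl((e^{itf}-1)\mathcal{K}\bigr)^k=\int_{X^k}\prod_{i=1}^k\bigl(e^{itf(x_i)}-1\bigr)\,\mathrm{Tr}\bigl[K(x_1,x_2)\cdots K(x_k,x_1)\bigr]\,d\mu(x_1)\cdots d\mu(x_k),$$
with the cyclic convention $x_{k+1}=x_1$. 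Expanding each factor $e^{itf(x_i)}-1=\sum_{l_i\geqslant 1}(it)^{l_i}f(x_i)^{l_i}/l_i!$, multiplying out, and matching the Taylor coefficient of $(it)^n/n!$ against the cumulant-generating series~\eqref{cumulants} delivers precisely the stated formula for $c_n(S_f)$.

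\textbf{Main obstacle.} The principal technical hurdle lies in the Pfaffian-to-determinant reduction and the accompanying convergence issues. One must verify that $\phi\,\mathcal{K}$ is trace-class, so that both Fredholm functionals are analytic in $\phi$ and the Plemelj--Smithies series converges; this uses boundedness and compact support of $\phi=e^{itf}-1$ together with the local trace-class property of $\mathcal{K}$ implicit in the Pfaffian structure. One also needs to check the equality of the Fredholm Pfaffian with the positive square root of the Fredholm determinant as entire functions of $\phi$, which is secured by matching the zeroth-order terms and propagating the identity by analyticity. Once these analytic points are in place, the cumulant extraction is a purely combinatorial rearrangement of absolutely convergent series.
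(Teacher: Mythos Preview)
Your approach is correct but takes a genuinely different route from the paper. The paper proceeds entirely combinatorially: it expands the Pfaffian $\rho_n(x_1,\ldots,x_n)=\mathrm{Pf}[\mathbb{K}(x_i,x_j)]$ by introducing a \emph{Pfaffian cycle decomposition} of permutations in $S_{2n}$, and shows directly that
\[
\rho_n(x_1,\ldots,x_n)=\sum_{\substack{\text{partitions}\\ \{K_1,\ldots,K_q\}\text{ of }[n]}}\ \prod_{\alpha=1}^q\ \sum_{\substack{\text{necklace arrangements}\\ \tau\text{ of }K_\alpha}}\frac{(-1)^{|K_\alpha|-1}}{2}\,\mathrm{Tr}\bigl(K(x_{\tau_1},x_{\tau_2})\cdots K(x_{\tau_{|K_\alpha|}},x_{\tau_1})\bigr).
\]
This block-multiplicative form is then inserted into the moment expansion $\mathbb{E}[S_f^N]=\sum_{\text{partitions of }[N]}\int f^{|M_1|}\cdots f^{|M_r|}\rho_r$, and the cumulant formula is read off by matching against the moment--cumulant identity~\eqref{momentsbycumulants}. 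No Fredholm functionals or operator traces appear; everything is finite sums and integrals.

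Your generating-functional route is the natural Pfaffian analogue of Soshnikov's determinantal argument and is more streamlined once the identity $\mathbb{E}\bigl[\prod(1+\phi)\bigr]=\sqrt{\det(I+\phi\mathcal{K})}$ is in hand. Two remarks. First, your justification of that identity is a bit loose: the pointwise relation $\mathrm{Pf}(M)^2=\det(M)$ applied term-by-term to the Fredholm series does \emph{not} by itself yield the Fredholm identity (squaring the series is not squaring each term). What you actually need is the operator-level relation $\mathrm{Pf}[J+\phi\mathbb{K}]^2=\det(I+J^{-1}\phi\mathbb{K})=\det(I+\phi K)$, using $J=Z$ and $\mathbb{K}=ZK$; this is standard (Rains, Borodin--Rains) and should be cited or proved rather than hand-waved. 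Second, your argument requires $\phi\mathcal{K}$ to be trace-class so that the Fredholm determinant and the Plemelj--Smithies expansion make sense; the paper's combinatorial proof avoids this hypothesis entirely, working only with finite-dimensional Pfaffians and the integrability of correlation functions. In the applications of the paper the trace-class condition is satisfied, so your route is adequate there, but the paper's proof covers the proposition in the generality in which it is stated.
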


\begin{proof}
	We begin the proof with an argument for the $n$-th  correlation functions of $\mathbb{P}$.
	Write
$
		 {K}(x,y) = \left(
		\begin{array}{cc}
			 {K}_{1,1}(x,y) & {K}_{1,2}(x,y)\\
			 {K}_{2,1}(x,y) &   {K}_{2,2}(x,y)
		\end{array}
		\right)
$
with  $K_{1,2}(x,y) = -K_{1,2}(y,x), K_{2,1}(x,y) = -K_{2,1}(y,x), K_{2,2}(x,y)=K_{1,1}(y,x).$
Clearly $$\mathbb{K}(x,y) = \left(
		\begin{array}{cc}
			  {\mathbb K}_{1,1}(x,y) & {\mathbb K}_{1,2}(x,y)\\
			 {\mathbb K}_{2,1}(x,y) &   {\mathbb K}_{2,2}(x,y)
		\end{array}
		\right)=ZK(x,y)=\left(
		\begin{array}{cc}
			 {K}_{2,1}(x,y) & {K}_{2,2}(x,y)\\
			 -{K}_{1,1}(x,y) &   -{K}_{1,2}(x,y)
		\end{array}
		\right)$$
satisfying  $\mathbb{K}(x,y)^T = -\mathbb{K}(y,x).$
For the fixed integer $n$ and  $(x_1,\cdots,x_n)\in X^n$, 	denote $R   = \big[\mathbb{K}(x_i,x_j)\big]_{i,j=1}^n.$ Then by definition, the $n$-th correlation function of $\mathbb{P}$ is
\begin{equation}\label{(7)}\rho_n (x_1, \cdots,x_n) = \mathrm{Pf}\big(R \big)= \frac{1}{2^n n!}\sum\limits_{\sigma\in S_{2n}} \sgn(\sigma) \prod\limits_{i=1}^n R_{\sigma(2i-1),\sigma(2i)}.\end{equation}
We claim that
\begin{equation}\label{(6)}{
		\rho_n (x_1, \cdots ,x_n)
		 =    \sum\limits_{\tiny{\substack{\mathrm{over~partitions}\\
				 \{K_1,\cdots,K_q\}~\mathrm{of}~[n]
			}
		}} \tiny{\prod\limits_{\alpha=1}^q }
		   \sum\limits_{\tiny{\substack{\mathrm{over~necklace ~arrangement}\\
				(\tau_{1},\cdots, \tau_{ |K_\alpha|})  ~\mathrm{of}~ K_\alpha
			}
		}}}{  {  { {\frac{(-1)^{|K_\alpha|-1}}{2}\mathrm{Tr}  (K(x_{\tau_{1}},x_{\tau_{2} })\cdots K(x_{\tau_{ |K_\alpha|}},x_{\tau_1}) )}.
}}
}\end{equation}
Here and thought the paper,  we regard a necklace arrangement for a number set $K$ as  a  permutation  of $K$ beginning with  the minimum number.

To prove the claim, we denote  $[[K]]=\bigcup\limits_{v\in K}\{2v-1,2v\}$ for a number set $K$. We call a permutation $\sigma\in S_{2n}$
as a Pfaffian cycle with respect to $(K, \tau)$, if $K=\{v_1,\cdots,v_k\}$ (here ordered by size from smallest to largest) is a subset of $[n]$ and $\tau=(\tau_1,\cdots,\tau_k)$ is a  necklace arrangement of $K$  such that  $\sigma$ is a permutation of the subset $[[K]]\subset [2n]$,  and leaves all elements outside
$[[K]]$ unchanged; and
for any $1\leqslant j\leqslant k $,    $\{\sigma_{2v_{j} },\sigma_{2v_{j}+1}\}=[[\tau_{j+1}]]=\{2\tau_{j+1}-1, 2\tau_{ j+1}\}$ ( here we default $\tau_{k+1} =\tau_{1}$).
Let $T(\tau)$ be the subset of  the Pfaffian cycle with respect to $(K, \tau)$. It is easy to see that $|T(\tau)|=2^{k}$.

For any permutation $\sigma \in S_{2n}$,  it is direct by induction to see that $\sigma$ admits a Pfaffian cycle decomposition.  That is,  there exist
a unique partitions
				$\mathcal{K} = \{K_1,\cdots,K_q\}$ of $[n]$,   necklace arrangements $\tau^{\alpha}$ of $K^\alpha$ and
$\sigma^{\alpha}\in T(\tau^\alpha)$ for $1\leqslant \alpha \leqslant q$,  and   permutations $\pi\in S_{2n},\rho\in S_n $ with
$$\{\pi(2i-1),\pi(2i)\} =\{2\rho(i)-1, 2\rho(i)\}\,\, \mathrm{for}~ i=1,\cdots,n;$$ $$ \pi(2i-1)=2\rho(i)-1,\pi(2i)=2\rho(i)~\mathrm{for}~ i=\tau^1_1,\cdots,\tau^q_1 ,$$
 such that
$$\sigma\circ \pi=\sigma^{1}\cdots\sigma^{q}. $$
A direct computation shows that
\begin{eqnarray*}
  sgn(\sigma)\prod\limits_{i=1}^n R_{\sigma(2i-1),\sigma(2i)} &=& sgn(\sigma\circ \pi)\prod\limits_{i=1}^n R_{(\sigma\circ \pi)(2i-1),(\sigma\circ \pi)(2i)} \\
   &=& \prod\limits_{\alpha=1}^q \sgn(\sigma^{\alpha})
   \prod_{v\in K_\alpha} R_{\sigma^{\alpha}_{2v-1},\sigma^{ \alpha}_{2v}}.
\end{eqnarray*}
Therefore,
\begin{equation}\label{(10)}\frac{1}{2^n n!}\sum\limits_{\sigma\in S_{2n}} \sgn(\sigma) \prod\limits_{i=1}^n R_{\sigma(2i-1),\sigma(2i)}= \sum\limits_{\tiny{\substack{\mathrm{over~partitions}\\
				 \{K_1,\cdots,K_q\}~\mathrm{of}~[n]
			}
		}} \prod\limits_{\alpha=1}^q
\sum\limits_{\tiny{\substack{\mathrm{over~\sigma^{\alpha}\in T(\tau^\alpha),}
				\mathrm{necklace} \\
		 \mathrm{arrangement}~ \tau^{\alpha} ~\mathrm{of}~  K_{\alpha}
			 	}
		}} \frac{\sgn(\sigma^{\alpha}) } {2}
 \prod_{v\in K_\alpha} R_{\sigma^{\alpha}_{2v-1},\sigma^{ \alpha}_{2v}}.
\end{equation}
It remains to compute the summation  with respect to  $T(\tau) $ for  a fixed necklace arrangement $\tau$ of a subset $K$. After composing with a permutation in $S_{2n}$, we may suppose that
$K=[k] $ and $\tau=(1,2,\cdots,k)$ with $k=|K|$ without loss of generalization. This implies that a permutation $\rho\in T(\tau)$ is equal to that $\{\rho_{2j},\rho_{2j+1}\}=\{2j+1,2j+2\}$ for $1\leqslant j\leqslant k$; here we default $\rho_{2k+1}=\rho_1$. Set
$$J=\{j: 1\leqslant j\leqslant n, \sigma_{2j}>\sigma_{2j+1}\}.$$ 
Then we have that $\sgn(\rho)=(2k-1)+|J|$ and
\begin{eqnarray*}
  \prod_{i \in K} R_{\rho_{2i-1},\rho_{2i}} &=&   \prod_{i=1}^k {\mathbb K}_{2-\chi_J(i-1), 1+\chi_J(i)} (x_i,x_{i+1}) \\
  &=& (-1)^{k-|J|} \prod_{i=1}^k {  K}_{1+\chi_J(i-1), 1+\chi_J(i)} (x_i,x_{i+1})  \\
  &=& (-1)^{k-|J|} \prod_{i=1}^k {  K}_{1+\chi_J(i-1), 1+\chi_J(i)} (x_i,x_{i+1});
\end{eqnarray*}
here $\chi_J$ is  an indicator function for the subset $J$ defined on $[k]$, and we default $\chi_J(0)=\chi_J(n)$.
Therefore,
\begin{eqnarray*}
 \sum\limits_{\rho\in T(\tau)}\sgn(\rho) \prod_{i \in K} R_{\rho_{2i-1},\rho_{2i}} &=&  \sum\limits_{J\in [k]} (-1)^{k-1}\prod_{i=1}^k {  K}_{1+\chi_J(i-1), 1+\chi_J(i)} (x_i,x_{i+1})\\
    &=&  (-1)^{k-1}\sum_{i_1,\cdots,i_k \in\{1,2\}} {K}_{i_1, i_2} (x_i,x_{i+1}) \\
      &=&  (-1)^{k-1}  \mathrm{Tr}\big (K(x_{ {1}},x_{ {2} })\cdots K(x_{k} ,x_{1})  \big).
\end{eqnarray*}
 Combing the above equation with  \eqref{(7)}  and  \eqref{(10)}, we obtain \eqref{(6)}, which finished the proof of the claim.

	Now let $  S_f(\xi) = \sum\limits_{\theta \in \xi} f(\theta)$ for any $\xi \in \mathrm{Conf}(X)$.  It is well known that
\begin{eqnarray}\label{(8)}
  \mathbb{E}_{\mathbb{P}}[S_f^N]  &=&  \mathbb{E}_{\mathbb{P}} \Big[\sum\limits_{\theta_1 \in \xi}f(\theta_1)  \sum\limits_{\theta_2 \in \xi}f(\theta_2)
		\cdots \sum\limits_{\theta_N \in \xi}f(\theta_N) \Big] \nonumber\\
     &=& \sum_{\substack{
				\mathrm{over}~\mathrm{patitions}\\
				 \{M_1,\cdots,M_r\} ~\mathrm{of}~[N]
			}
		}\mathbb{E}_{\mathbb{P}}[
		\sum\limits_{\theta_1 \neq \theta_2 \neq \cdots \neq \theta_r}
		f^{|M_1|}(\theta_1) f^{|M_2|}(\theta_2) \cdots f^{|M_r|}(\theta_r) ]\\
    &=& \sum_{\substack{
				\mathrm{over}~\mathrm{patitions}\\
				 \{M_1,\cdots,M_r\} ~\mathrm{of}~[N]
			}
		} \int_{X^r} f^{|M_1|}(x_1) f^{|M_2|}(x_2) \cdots f^{|M_r|}(x_r) \rho_r (x_1, \cdots ,x_r)d\mu(x_1)\cdots d\mu(x_r). \nonumber
\end{eqnarray}

	Substituting \eqref{(6)} into \eqref{(8)}, we obtain that the $N$-th moment of $S_f$ has the following form:
	\begin{align*}
		\mathbb{E}_{\mathbb{P}}[S_f^N] = \sum_{\substack{\mathrm{over~partitions}\\
				\mathcal{M} = \{M_1,\cdots ,M_r\} ~\mathrm{of}~ [N] }}
		\quad
		\sum_{\substack{\mathrm{over~partitions}\\
				\mathcal{K} = \{ K_1,\cdots,K_q\} ~\mathrm{of} ~[r]}}
		S_{\mathcal{M},\mathcal{K}},
	\end{align*}
where
\begin{align*}
S_{\mathcal{M},\mathcal{K}} =   \int_{X^r} f^{|M_1|}(x_1) f^{|M_2|}(x_2) \cdots f^{|M_r|}(x_r) {\prod\limits_{i=1}^q }
		   \sum\limits_{\tiny{\substack{\mathrm{over~necklace arrangement}\\
				 \tau^i   ~\mathrm{of}~ K_i
			}
		}}\frac{(-1)^{|K_i|-1}}{2}\\
 { {  { {\mathrm{Tr}  (K(x_{\tau^i_{1}},x_{\tau^i_{2} })\cdots K(x_{\tau^i_{ |K_\alpha|}},x_{\tau^i_1}) )}
}}
}
d\mu(x_1)\cdots d\mu(x_r) \\
  = {\prod\limits_{i=1}^q }
		   \sum\limits_{\tiny{\substack{\mathrm{over~necklace ~arrangement} \\
				 \tau^i   ~\mathrm{of}~ K_i
			}
		}}\frac{(-1)^{|K_i|-1}}{2} \int_{X^{|K_i|}} f^{|M_{\tau^i_1}|}(x_{\tau^i_1}) f^{|M_{\tau^i_2}|}(x_{\tau^i_2}) \cdots f^{|M_{\tau^i_{|K_i|}}|}(x_{\tau^i_{|K_i|}})\\
{ {  { {\mathrm{Tr}  (K(x_{\tau^i_{1}},x_{\tau^i_{2} })\cdots K(x_{\tau^i_{ |K_i|}},x_{\tau^i_1}) )}
}}
} d\mu(x_{\tau^i_1})\cdots d\mu(x_{\tau^i_{ |K_i|}})\\
 = {\prod\limits_{i=1}^q }
		   \sum\limits_{\tiny{\substack{\mathrm{over~necklace~ arrangement} \\
				 \tau^i   ~\mathrm{of}~ K_i
			}
		}}\frac{(-1)^{|K_i|-1}}{2} \int_{X^{|K_i|}} f^{|M_{\tau^i_1}|}(x_{1}) f^{|M_{\tau^i_2}|}(x_{2}) \cdots f^{|M_{\tau^i_{|K_i|}}|}(x_{|K_i|})\\
{ {  { {\mathrm{Tr}  (K(x_{ {1}},x_{ {2} })\cdots K(x_{ { |K_i|}},x_{ 1}) )}
}}
} d\mu(x_{ 1})\cdots d\mu(x_{ { |K_i|}})\\
\end{align*}
	To interchange the order of summation, we construct a new partition $\mathcal{P}$ of $[N]$ as follows:
	\begin{align*}
		\mathcal{P} = \{P_1,\cdots, P_q\},\quad\mathrm{where} \quad P_i =  \bigsqcup\limits_{j\in K_i} M_j,\quad\mathrm{for}\quad  i = 1,\cdots,q.
	\end{align*}
	Then $\{M_j\}_{j\in K_i}$ gives a partition of $P_i$ denoted by $\mathcal{P}_i$:
	\begin{align*}
		\mathcal{P}_i = \{P_{i,1},\cdots, P_{i,t_i}\} = \{M_j,~j\in K_i\}, \quad\mathrm{where}\quad t_i = |K_i|, \quad \mathrm{for}\quad i = 1, \cdots, q.
	\end{align*}
	It follows that
	\begin{align}\label{(3)}
		\mathbb{E}_{\mathbb{P}}[S_f^N]
		= \sum_{\substack{\mathrm{over~partitions}\\
				\mathcal{P} = \{P_1,\cdots,P_q\}~\mathrm{of}~[N]}}
		\quad
		\sum_{ \substack{ \mathrm{over}~ i = 1,\cdots, q,\\
				\mathrm{patitions} ~ \mathcal{P}_i ~ \mathrm{of} ~P_i: \\
				\mathcal{P}_i = \{P_{i,1},\cdots, P_{i,t_i}\} } }
	\prod\limits_{i=1}^q
   \sum\limits_{ {\substack{\mathrm{over~necklace~arrangement}\\
				 \tau^i    ~\mathrm{of}~  [t_i]
			}
		}}
		\frac{(-1)^{t_i-1}}{2}\Big( \int_{X^{t_i}} f^{|P_{i,\tau^i_1}|}(x_1)\nonumber \\ f^{|P_{i,\tau^i_2}|}(x_2)\cdots f^{|P_{i,\tau^i_{t_i}}|}(x_{t_i})
		 		 {  {  { {\mathrm{Tr}  (K(x_{ {1}},x_{ {2} })\cdots K(x_{ { t_i}},x_{ 1}) )}
}}
}  d\mu(x_1)\cdots d\mu(x_{t_i}) \Big)  \nonumber \\
		= \sum_{\substack{\mathrm{over~partitions}\\
				\mathcal{P} = \{P_1,\cdots,P_q\}~\mathrm{of}~[N]}}
		\prod\limits_{i=1}^q
		\Big( \sum_{ \substack{ \mathrm{over~partitions}~\mathcal{P}_i \\
				\mathrm{of}~ P_i: ~\mathcal{P}_i = \{P_{i,1},\cdots, P_{i,t_i}\} } }
	\frac{(-1)^{t_i-1}(t_i-1)!}{2}	\int_{X^{t_i}} f^{|P_{i,1}|}(x_1) f^{|P_{i,2}|}(x_2)\nonumber \\ \cdots
		f^{|P_{i,t_i}|}(x_{t_i})
		{\mathrm{Tr}  (K(x_{ {1}},x_{ {2} })\cdots K(x_{ { t_i}},x_{ 1}) )}~ d\mu(x_1)\cdots d\mu(x_{t_i})  \Big).
	\end{align}
	
	Comparing $\eqref{(3)}$ with $\eqref{momentsbycumulants}$ we arrive at
	\begin{align*}
		c_n(S_f) &= \sum_{\substack{\mathrm{over~partitions}~\\
				\mathcal{P}=\{P_1,\cdots,P_k\}~\mathrm{of}~[n]}
		}
		\frac{(-1)^{k-1} (k-1)!}{2} \int_{X^k} f^{|P_1|}(x_1)f^{|P_2|}(x_2)\cdots
		f^{|P_k|}(x_k)
		\\
		&\qquad\qquad\qquad\cdot\mathrm{Tr}\big(K(x_1,x_2)K(x_2,x_3)\cdots K(x_k,x_1)\big)d\mu(x_1)\cdots d\mu(x_{k})  \\
		&= \sum_{\substack{(l_1,\cdots,l_k)\in\mathbb{N}_+^k,\\
				l_1+\cdots+l_k = n }} \frac{(-1)^{k-1} (k-1)!}{2}  \frac{n!}{l_1!\cdots l_k!}\cdot\frac{1}{k!} \int_{X^k} f^{l_1}(x_1)f^{l_2}(x_2)\cdots f^{l_k}(x_k) \\
		&\qquad\qquad\qquad\cdot\mathrm{Tr}\big(K(x_1,x_2)K(x_2,x_3)\cdots K(x_k,x_1)\big)d\mu(x_1)\cdots d\mu(x_{k}) \\
		&= \frac{1}{2} \sum_{\substack{(l_1,\cdots,l_k)\in\mathbb{N}_+^k,\\
				l_1+\cdots+l_k = n }} (-1)^{k-1} \frac{n!}{l_1!\cdots l_k!}\cdot\frac{1}{k} \int_{X^k} f^{l_1}(x_1)f^{l_2}(x_2)\cdots f^{l_k}(x_k)\\
		&\qquad\qquad\qquad\cdot\mathrm{Tr}\big(K(x_1,x_2)K(x_2,x_3)\cdots K(x_k,x_1)\big)d\mu(x_1)\cdots d\mu(x_{k}).
	\end{align*}
	The proof of Proposition~\ref{lemma-cumulants-Pfaffian} is finished by this equation.
\end{proof}

Applying  Proposition~\ref{lemma-cumulants-Pfaffian} in the case that $f=\chi_X$,  we have that for $n\in\mathbb{N}_+,$
\begin{align*}
	c_n(\#_{X })&=\frac{1}{2}
	\sum_{\substack{(l_1,\cdots,l_k)\in\mathbb{N}_+^k,\\
			l_1+\cdots+l_k = n }}  \frac{(-1)^{k-1}n!}{l_1!\cdots l_k!}\cdot\frac{1}{k}
	  \int_{X ^k} \mathrm{Tr}\big(K (x_1,x_2)K (x_2,x_3)\cdots K (x_k,x_1)\big)\\
		&\qquad\qquad\qquad\qquad\qquad\qquad\qquad\qquad\qquad\qquad\qquad\qquad d\mu (x_1)\cdots d\mu (x_{k})
\end{align*}
For   $k\in\mathbb{N}_+,$ denote
\begin{align*}
	V_k(\#_{X }) =\frac{1}{2}  \int_{X ^k} \mathrm{Tr}\big(K (x_1,x_2)K (x_2,x_3)\cdots K (x_k,x_1)\big) d\mu (x_1)\cdots d\mu (x_k).
\end{align*}
For $k,n\in\mathbb{N}_+,$ $k\leqslant n,$ define
\begin{align*}
	v(n,k)=\sum_{\substack{(l_1,\cdots,l_k)\in\mathbb{N}_+^k,\\
			l_1+\cdots+l_k = n }}  \frac{n!}{l_1!\cdots l_k!}\cdot\frac{1}{k!}.
\end{align*}
Then
\begin{align*}
	v(n,k)=v(n-1,k-1) + kv(n-1,k),
\end{align*}
where $v(0,k)=v(n,0)=0$ for all $k,n.$
Hence we have that for $n\geqslant 2$,
\begin{align}\label{eq-cn-and-Vk}
	c_n(\#_{X })
	&=\sum\limits_{k=1}^n(-1)^{k-1}(k-1)!v(n,k)V_k(\#_{X }) \nonumber \\
	&=\sum\limits_{k=1}^n(-1)^{k-1}\big((k-1)!v(n-1,k-1)V_k(\#_{X })  + k!v(n-1,k)V_{k}(\#_{X }) \big)\nonumber\\
	&=\sum\limits_{k=2}^{n}(-1)^{k-1}(k-1)! v(n-1,k-1)(V_{k}(\#_{X }) -V_{k-1}(\#_{X }) ).
\end{align}
In order to estimate $V_k $, we need the following lemmas.

\begin{lemma}\label{identity} Suppose that  the $2\times 2$ matrix-valued kernel function
${\mathbb K}(x, y) = ZK(x, y)$ is antisymmetric over a domain $X $. If
$$W_n=\int_{X^n} K(x_1,x_2)  K(x_2,x_3)  \cdots K(x_n,x_1) d\mu(x_1)\cdots d\mu(x_n)$$
is integrable, then $W_n$ is  a multiple of  identity matrix.
\end{lemma}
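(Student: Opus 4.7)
\medskip

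\noindent\textbf{Proof plan.} The overall strategy is to convert the antisymmetry $\mathbb{K}(x,y)^T=-\mathbb{K}(y,x)$ into a functional equation on $W_n$ of the shape $W_n^T=-ZW_nZ$, and then exploit dimension two to finish.

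\medskip

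\noindent\emph{Step 1 (pointwise identity for $K$).} Since $Z^T=-Z$, $Z^2=-I$, and $\mathbb{K}(x,y)=ZK(x,y)$, the hypothesis $\mathbb{K}(x,y)^T=-\mathbb{K}(y,x)$ rearranges to
\[
K(x,y)^T \;=\; -Z\,K(y,x)\,Z.
\]
This will be the only incarnation of the antisymmetry used in the rest of the argument.

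\medskip

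\noindent\emph{Step 2 (transpose of $W_n$).} Taking the transpose of $W_n$ reverses the order of the factors and transposes each of them. Substituting Step 1 into every factor yields
\[
W_n^T \;=\; (-1)^n\!\int\! \bigl[ZK(x_1,x_n)Z\bigr]\bigl[ZK(x_n,x_{n-1})Z\bigr]\cdots\bigl[ZK(x_2,x_1)Z\bigr]\,d\mu(x_1)\cdots d\mu(x_n).
\]
The $n-1$ interior pairs $Z\cdot Z$ each collapse to $-I$, contributing a sign $(-1)^{n-1}$, and then relabeling the dummy variables by reversing the cyclic order ($y_1=x_1,\;y_2=x_n,\;\ldots,\;y_n=x_2$) puts the remaining product back into the form of $W_n$. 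The net sign is $(-1)^n(-1)^{n-1}=-1$, so
\[
W_n^T \;=\; -Z\,W_n\,Z.
\]

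\medskip

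\noindent\emph{Step 3 (finishing in $2\times 2$).} Multiply by $Z$ on the left and use $Z^T=-Z$, $Z^2=-I$:
\[
(Z W_n)^T \;=\; W_n^T Z^T \;=\; (-ZW_nZ)(-Z) \;=\; ZW_nZ^2 \;=\; -ZW_n,
\]
so $ZW_n$ is a skew-symmetric $2\times 2$ matrix. Every such matrix is a scalar multiple of $Z$, whence $ZW_n=cZ$ for some scalar $c$; multiplying on the left by $-Z=Z^{-1}$ gives $W_n=cI$, as claimed.

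\medskip

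\noindent\emph{Main obstacle.} The only delicate step is Step 2: one must track the sign coming from the $n$ copies of the minus in $K^T=-ZKZ$, the $(n-1)$ interior $Z^2=-I$ contractions, and the orientation flip induced by the cyclic reversal of variables. Once this bookkeeping is carried out correctly, Steps 1 and 3 are purely algebraic.
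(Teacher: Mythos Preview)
Your proof is correct and follows essentially the same route as the paper: the paper derives the identity $K(x,y)^T=ZK(y,x)Z^{-1}$ (which is the same as your $K(x,y)^T=-ZK(y,x)Z$ since $Z^{-1}=-Z$), telescopes to $W_n^T=ZW_nZ^{-1}$, and then invokes an ``elementary matrix argument'' that your Step~3 makes explicit. Your sign bookkeeping in Step~2 is accurate and lands on the same conjugation relation.
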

\begin{proof}  Since $\mathbb{K} (x,y)= ZK(x,y)$ is antisymmetric, we have that
\begin{align*}
	K(x,y)^T = ZK(y,x)Z^{-1}.
\end{align*}
This follows  that
\begin{align*}
	W_n^T &= \int_{X^n} K(x_n,x_1)^T K(x_{n-1},x_n)^T \cdots K(x_1,x_2)^T  d\mu(x_1)\cdots d\mu(x_n)\\
	&= \int_{X^n} ZK(x_1,x_n)K(x_n,x_{n-1})\cdots K(x_2,x_1)Z^{-1} d\mu(x_1)\cdots d\mu(x_n)\\
	&=ZW_n Z^{-1}.
\end{align*}
It follows by an elementary matrix argument that $W_n $ is a multiple of   the 2-by-2 identity matrix.
\end{proof}

\begin{lemma}\label{claim-of-tr-inconsistent}
Let $\mathbb{P}$ be a Pfaffian point process with the matrix-valued kernel function $\mathbb{K}(x,y)=ZK(x,y)$ over a domain $X$ with respect to the reference measure $\mu$ and  have FRCP with $\{N,f^{(i)},g^{(i)},h^{(i)},e^{(i)},\alpha,\beta\}$, where $K(x,y)= \lambda \left({\begin{array}{cc}a(x,y) &d(x,y)\\b(x,y)&a(y,x)\end{array}} \right) (\lambda=\frac{1}{2}\, \mathrm{or}\, 1)$. Then for $k\in\mathbb{N}_+,$ $k\geqslant2,$
\begin{align*}
	&\frac{1}{2}  \int_{X ^k} \mathrm{Tr}\big(K (x_1,x_2)K (x_2,x_3)\cdots K (x_k,x_1)\big)  d\mu(x_1)\cdots d\mu(x_k)\\
	&\quad= \lambda\mathrm{Tr}\big(A^k\big) + \mathrm{Tr}\big((A-A^2)p_k(A)\big) + \sum\limits_{l\leqslant \frac{k}{2}} C_{Q_1,Q_2,\cdots,Q_l}\mathrm{Tr}(Q_1Q_2\cdots Q_l),
\end{align*}
where $p_k$ is a polynomial of degree $k-2$  which coefficients depending on  $k,\lambda,\alpha,\beta$, $C_{Q_1,Q_2,\cdots,Q_l}$ are constants related to $k,\lambda,\beta,N$ and each $Q_i$ is a rank-1 operator expressed of the following forms:
\begin{align*}
	D A^{\dag\,m} f^{(i)}\otimes A^{m^{\prime}}g^{(i)},\quad\mathrm{or}\quad
	h^{(i)}\otimes A^{m^{\prime}} e^{(i)}
\end{align*}
for some $m,m',i$ satisfying  $0\leqslant m,m^{\prime}\leqslant k$, $i =1,\cdots,N.$
\end{lemma}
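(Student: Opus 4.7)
The plan is to expand the integrand entrywise in the $2\times 2$ matrix kernel $K=\lambda M$, sort the resulting sum by the number of ``off-diagonal transitions'' in the cyclic word of entries, and then invoke the two FRCP commutator identities to reduce each word modulo rank-$N$ corrections. Introducing the operator matrix $\tilde{M}=\bigl(\begin{smallmatrix}A&D\\B&A^{\dag}\end{smallmatrix}\bigr)$, the integrand expands as
\begin{align*}
\int_{X^k}\mathrm{Tr}\!\big(K(x_1,x_2)\cdots K(x_k,x_1)\big)\,d\mu^{\otimes k} \;=\; \lambda^k\!\!\!\sum_{(i_1,\ldots,i_k)\in\{1,2\}^k}\!\!\!\mathrm{Tr}\!\big(\tilde{M}_{i_1,i_2}\cdots \tilde{M}_{i_k,i_1}\big).
\end{align*}
I sort the right-hand side by the cyclic number $l\in\{0,1,\ldots,\lfloor k/2\rfloor\}$ of $(1,2)$-transitions in the sequence, which equals the number of $D$'s (and of $B$'s) in the resulting operator word.

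The case $l=0$ contributes $\mathrm{Tr}(A^k)+\mathrm{Tr}(A^{\dag\,k})=2\mathrm{Tr}(A^k)$ by the standing smoothness convention on $a$. For $l\ge 1$ each word has the cyclic form $A^{n_1}DA^{\dag\,m_1}B\cdots A^{n_l}DA^{\dag\,m_l}B$ with $\sum_i(n_i+m_i)+2l=k$. Iterating the first FRCP relation yields
\begin{align*}
A^{\dag\,m}B \;=\; BA^m + \sum_{j=0}^{m-1}\sum_{i=1}^N A^{\dag\,j}f^{(i)}\otimes A^{\dag\,(m-1-j)}g^{(i)},
\end{align*}
so every $A^{\dag\,m_r}B$ factor becomes $BA^{m_r}$ modulo rank-$N$ corrections; applying the second FRCP relation then converts each remaining $DB$ factor into $\alpha A^2+\beta A$ modulo rank-$N$. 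Keeping only the principal parts of all $2l$ reductions in a given word leaves the polynomial contribution $\tfrac{\lambda^k}{2}\mathrm{Tr}\!\big(P_k(A)\big)$, where $P_k$ arises by summing $(\alpha A^2+\beta A)^l A^{k-2l}$ over all combinatorial configurations producing each cyclic word.

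The key algebraic identity to prove is that $\lambda^k P_k(A)-2\lambda A^k$ is divisible by $A-A^2$ as a polynomial; granting this, one sets $p_k(A):=\big(\lambda^k P_k(A)-2\lambda A^k\big)/\big(2(A-A^2)\big)$, which has degree $k-2$, and the principal part equals $\lambda\mathrm{Tr}(A^k)+\mathrm{Tr}\!\big((A-A^2)p_k(A)\big)$. Since $P_k$ has no constant term, divisibility by $A$ is automatic, so the task reduces to showing $\lambda^k P_k(1)=2\lambda$. To see this I would observe that $\tfrac{1}{2}P_k(1)$ coincides with $\tfrac{1}{2}\mathrm{Tr}(\tilde{M}^k)$ under the formal substitution $A=A^{\dag}=I,\ DB=BD=\alpha+\beta$; the resulting scalar $2\times 2$ matrix $\bigl(\begin{smallmatrix}1 & D\\B & 1\end{smallmatrix}\bigr)$ has characteristic polynomial $(x-1)^2-(\alpha+\beta)$, hence eigenvalues $1\pm\sqrt{\alpha+\beta}$ and
\begin{align*}
\tfrac{1}{2}P_k(1)=\tfrac{1}{2}\Bigl[(1+\sqrt{\alpha+\beta})^k+(1-\sqrt{\alpha+\beta})^k\Bigr].
\end{align*}
Plugging in the two admissible FRCP regimes $(\lambda,\alpha+\beta)=(\tfrac{1}{2},1)$ and $(1,0)$ both give $\lambda^k P_k(1)=2\lambda$, confirming the identity.

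The remaining contributions are produced by choosing the rank-$N$ branch at one or more reductions. Because the first reduction has already eliminated the interior $A^{\dag}$ and $B$ factors inside each block before any rank-one insertion, the operators remaining strictly between successive rank-one factors are only $A$'s and $D$'s; using the identities $(f\otimes g)T=f\otimes T^{*}g$ and $T(f\otimes g)=Tf\otimes g$ together with the cyclic property of the trace, every such summand collapses into a trace of a product of $l\le\lfloor k/2\rfloor$ rank-one operators of the two forms prescribed in the Lemma (at most one rank-one factor arises per $DB$-block). The main obstacle is the bookkeeping in this last step: tracking the combinatorial multiplicities cleanly so that the divisibility in Step~5 survives, and absorbing the interior $A^{\dag}$-powers produced by the first FRCP reduction into the right-hand tensor factor via the adjoint identification, so as to recover exactly the prescribed forms $DA^{\dag\,m}f^{(i)}\otimes A^{m'}g^{(i)}$ and $h^{(i)}\otimes A^{m'}e^{(i)}$.
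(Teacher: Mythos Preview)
Your approach is essentially the paper's: expand the $2\times 2$ trace as a sum over index words, reduce each block $DA^{\dag\,m}B$ via the FRCP identities to $(\alpha A^2+\beta A)A^m$ plus rank-one corrections of the stated shapes, and show the resulting principal polynomial equals $2\lambda A^k$ modulo $A-A^2$ by evaluating at $A=1$ to obtain $\tfrac12\big[(1+\sqrt{\alpha+\beta})^k+(1-\sqrt{\alpha+\beta})^k\big]$. The only packaging differences are that the paper first invokes Lemma~\ref{identity} to fix $i_1=1$ (halving the word sum), merges your two sequential FRCP reductions into the single block identity~\eqref{(9)} (which already delivers the rank-one factors in the prescribed forms and so sidesteps the bookkeeping you flag at the end), and reaches the same value at $A=1$ by rewriting $\alpha A^2+\beta A=(\alpha+\beta)A^2+\beta(A-A^2)$ and expanding binomially rather than via your $2\times 2$ eigenvalue trick.
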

\begin{proof} By Lemma \ref{identity},  it follows that
\begin{eqnarray}\label{(12)}
    &  & \frac{1}{2} \int_{X ^k} \mathrm{Tr}\big(K (x_1,x_2)K (x_2,x_3)\cdots K (x_k,x_1)\big)  d\mu(x_1)\cdots d\mu(x_k)\nonumber \\
    &=&  \sum_{ 	i_2,\cdots,i_k\in\{1,2\}
		}
		\int_{X ^k} K_{  1,i_2}(x_1,x_2)K_{ i_2,i_3}(x_2,x_3)\cdots K_{ i_k,  1}(x_k,x_1)  d\mu(x_1)\cdots d\mu(x_k).\quad
\end{eqnarray}
	When the case that $(i_2,\cdots,i_k)= (1,\cdots,1)$,   we have that
\begin{eqnarray}\label{(13)}\int_{X^k} K_{1,1}(x_1,x_2)K_{1,1}(x_2,x_3)\cdots K_{1,1}(x_k,x_1) d\mu(x_1)\cdots d\mu(x_k) =  \lambda^k\mathrm{Tr}(A^k).\end{eqnarray}
We now consider the integral in the case that $(i_1,\cdots,i_k)\neq (1,\cdots,1).$  Noting that the sequence
  $\{(i_1,i_2), (i_2,i_3), \cdots, (i_k,i_1)\}$ doesn't contain consecutive occurrences of $``(1,2)"$ or $``(2,1)"$, we may rewrite
\begin{align}\label{terms-of-T_k}
	&\int_{X^k} K_{ 1,i_2}(x_1,x_2)K_{i_2,i_3}(x_2,x_3)\cdots K_{i_k, 1}(x_k,x_1) dx_1\cdots dx_k\nonumber\\
 =& \lambda^k\mathrm{Tr}\Big(A^{n_1}D A^{\dag\,n_2}BA^{n_3}\cdots A^{n_{2m-1}}D A^{\dag\,n_{2m}}B\Big),
\end{align}
with some nonnegative integer $n_1,\cdots,n_{2m}$  satisfying $n_1+\cdots+n_{2m-1}+n_{2m}+2m = k.$
By the assumption    $A^{\dag} B -B  A = \sum\limits_{i=1}^{N}  f^{(i)}\otimes g^{(i)}$ and $D B -(\alpha A^{2}+\beta A)  = \sum\limits_{i=1}^{N} h^{(i)}\otimes e^{(i)}$,   we have that for an nonnegative integer $n\leqslant k$,
\begin{align}\label{(9)}
	D A^{\dag\,n}B
	&= DBA^{n} + \sum\limits_{j=1}^n D A^{\dag\,n-j} (A^{\dag}B-BA)A^{j-1} \nonumber\\
	&=DBA^{n} + \sum\limits_{j=1}^n \sum\limits_{i=1}^{N}  D A^{\dag\,n-j} (f^{(i)}\otimes g^{(i)})A^{j-1}\nonumber\\
    &= (\alpha A^{2}+\beta A)A^n +\sum\limits_{i=1}^{N}( h^{(i)}\otimes e^{(i)}) A^{n}+ \sum\limits_{j=1}^n \sum\limits_{i=1}^{N}  D A^{\dag\,n-j} (f^{(i)}\otimes g^{(i)})A^{j-1}.
\end{align}
Substituting  \eqref{(9)} into \eqref{terms-of-T_k}, we have that
\begin{align}\label{(11)}
&\int_{X^k} K_{ 1,i_2}(x_1,x_2)K_{i_2,i_3}(x_2,x_3)\cdots K_{i_k, 1}(x_k,x_1) dx_1\cdots dx_k\nonumber\\
 =& \lambda^k\mathrm{Tr}\Big(A^{k-2m}(\alpha A^{2}+\beta A)^m \Big)+ \sum\limits_{l\leqslant \frac{k}{2}} C_{Q_1,Q_2,\cdots,Q_l}\mathrm{Tr}(Q_1Q_2\cdots Q_l)
\end{align}
with some constant  $C_{Q_1,Q_2,\cdots,Q_l}.$ Combing it with \eqref{(12)}, \eqref{(13)}, we have that
\begin{align}\label{(14)}
& \frac{1}{2} \int_{X ^k} \mathrm{Tr}\big(K (x_1,x_2)K (x_2,x_3)\cdots K (x_k,x_1)\big)  d\mu(x_1)\cdots d\mu(x_k)\nonumber\\
 =& \lambda^k\sum\limits_{m=0}^{[\frac{k}{2}]} \tbinom{k}{2m} \mathrm{Tr}\big(A^{k-2m}(\alpha A^2+\beta A)^m \big)+ \sum\limits_{l\leqslant \frac{k}{2}} C_{Q_1,Q_2,\cdots,Q_l}\mathrm{Tr}(Q_1Q_2\cdots Q_l).
\end{align}
with some constant  $C_{Q_1,Q_2,\cdots,Q_l} $ depending on $\lambda,k,N,\alpha$.

Moreover,  a direct computation shows that
%
%
\begin{align}\label{(15)}
	&\lambda^k\sum\limits_{m=0}^{[\frac{k}{2}]} \tbinom{k}{2m} \mathrm{Tr}\big(A^{k-2m}(\alpha A^2+\beta A)^m)\big)\nonumber\\
 =& \lambda^k\sum\limits_{m=0}^{[\frac{k}{2}]} \tbinom{k}{2m} \mathrm{Tr}\big(A^{k-2m}((\alpha+\beta )A^2+\beta (A-A^2))^m)\big)\nonumber\\
 =&\lambda^k\sum\limits_{m=0}^{[\frac{k}{2}]} \tbinom{k}{2m}  (\alpha+\beta )^m \mathrm{Tr}\big(A^{k } \big)+\mathrm{Tr}\big((A-A^2) p_k(A) \big)\nonumber\\
	=&\frac{(1+\sqrt{\alpha+\beta})^k+(1-\sqrt{\alpha+\beta})^k}{2}\lambda^k  \mathrm{Tr}\big(A^{k } \big)+\mathrm{Tr}\big((A-A^2) p_k(A) \big).
\end{align}
Here  $p_k$ is a polynomial of degree $k-2$  which coefficients depending on  $k,\lambda,\alpha,\beta$. When $\lambda=1 $ and $\alpha+\beta=0,$ or
 $\lambda=\frac{1}{2} $ and  $\alpha+\beta=1,$ it is clear that
 $$\frac{(1+\sqrt{\alpha+\beta})^k+(1-\sqrt{\alpha+\beta})^k}{2}\lambda^k=\lambda. $$
Substitution it and \eqref{(15)} into \eqref{(14)}, we derive Lemma~\ref{claim-of-tr-inconsistent}.
\end{proof}
\begin{remark}
The proof of the lemma shows that in the FRCR definition, the term $\alpha A^2+\beta A$ can be replaced by any polynomial $p(A)$ with uniformly bounded coefficients, where the sum of coefficients satisfies: $1$ when $\lambda=\frac{1}{2}$, and $0$ when
$\lambda=0$. Theorem \ref{Th2} remains valid under this modification.

This observation also indicates that the argument developed in this section does not apply to other values of $\lambda$.
\end{remark}
 We  remark the following lemma to estimate the trace of the term involving $Q_i$.
\begin{lemma}\label{claim-inner-prod-DAf-Ag}
	Under the conditions of Theorem~\ref{Th2}, for a fixed integer $t $ greater than $2$, take $Q_{1,L},\cdots,Q_{t,L}$ from
	\begin{align*}
		D_L A_L^{\dag\,m} f^{(i)}_L\otimes A_L^{m^{\prime}}g^{(i)}_L,\quad\mathrm{or}\quad
		h^{(i)}_L\otimes A_L^{m^{\prime}} e^{(i)}_L,\quad i=1,\cdots,N, 1\leqslant m,m'\leqslant 2t+1.
	\end{align*}	
	As $L\rightarrow+\infty,$ we have that
	\begin{align*}
		\mathrm{Tr}\big(Q_{1,L}Q_{2,L}\cdots Q_{t,L}\big) = o\big(\mathrm{Var}_{\mathbb{P}_L}(\#_{X_L})\big)^{t}.
	\end{align*}
\end{lemma}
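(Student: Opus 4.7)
The plan is to reduce $\Tr(Q_{1,L}\cdots Q_{t,L})$ to a cyclic product of scalar inner products and then invoke hypothesis~(iv) of Theorem~\ref{Th2} factor by factor. Write $Q_{i,L}=u_{i,L}\otimes v_{i,L}$, where each pair $(u_{i,L},v_{i,L})$ is either $(D_L A_L^{\dag m}f^{(a)}_L,\,A_L^{m'}g^{(a)}_L)$ or $(h^{(a)}_L,\,A_L^{m'}e^{(a)}_L)$, with $1\leqslant m,m'\leqslant 2t+1$ and $a\in\{1,\dots,N\}$.

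First, iterating the elementary rank-one composition rule $(u\otimes v)(u'\otimes v')=\langle u',v\rangle\,(u\otimes v')$ gives
\[
Q_{1,L}Q_{2,L}\cdots Q_{t,L}=\Big(\prod_{i=2}^{t}\langle u_{i,L},v_{i-1,L}\rangle\Big)\,(u_{1,L}\otimes v_{t,L}),
\]
and combining with the identity $\Tr(u\otimes v)=\langle u,v\rangle$ yields the closed cyclic formula
\[
\Tr\bigl(Q_{1,L}Q_{2,L}\cdots Q_{t,L}\bigr)=\prod_{i=1}^{t}\langle u_{i,L},v_{i-1,L}\rangle,\qquad v_{0,L}:=v_{t,L}.
\]

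Next, a four-way case analysis on the types of $u_{i,L}$ and $v_{i-1,L}$ shows that each scalar $\langle u_{i,L},v_{i-1,L}\rangle$ is exactly one of the four inner products $\langle D_L A_L^{\dag m}f^{(a)}_L,A_L^{m'}g^{(b)}_L\rangle$, $\langle D_L A_L^{\dag m}f^{(a)}_L,A_L^{m'}e^{(b)}_L\rangle$, $\langle h^{(a)}_L,A_L^{m'}g^{(b)}_L\rangle$, or $\langle h^{(a)}_L,A_L^{m'}e^{(b)}_L\rangle$ appearing in hypothesis~(iv). Since $t$ is fixed, the exponents $m,m'\in\{1,\dots,2t+1\}$ and the indices $a,b\in\{1,\dots,N\}$ vary over a finite set independent of $L$, so hypothesis~(iv) gives $|\langle u_{i,L},v_{i-1,L}\rangle|=o\bigl(\Var_{\mathbb{P}_L}(\#_{X_L})\bigr)$ uniformly in these choices. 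Multiplying the $t$ bounds then produces
\[
\bigl|\Tr(Q_{1,L}\cdots Q_{t,L})\bigr|\leqslant\prod_{i=1}^{t}\bigl|\langle u_{i,L},v_{i-1,L}\rangle\bigr|=o\bigl(\Var_{\mathbb{P}_L}(\#_{X_L})\bigr)^{t},
\]
which is the claimed estimate.

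The argument is essentially bookkeeping and I do not anticipate any genuine analytic obstacle. The only points requiring any care are the case-by-case identification of each cyclic scalar with one of the four families in hypothesis~(iv), and the observation that the slight mismatch of exponent ranges (the lemma takes $m,m'\geqslant 1$ while hypothesis~(iv) allows $m,n\geqslant 0$) is harmless since the larger range in (iv) already contains ours.
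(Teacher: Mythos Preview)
Your proposal is correct and follows essentially the same approach as the paper: write each $Q_{j,L}$ as a rank-one operator, expand the trace of the product as a cyclic product of inner products, and then apply hypothesis~(iv) of Theorem~\ref{Th2} to each factor. The paper's argument is slightly terser but identical in substance.
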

\begin{proof}Write $Q_{j,L}=p_{j,L}\otimes q_{j,L}~(j=1,\cdots,t)$, where $p_{j,L}, q_{j,L}$ are taken from $D_L A_L^{\dag\,m} f^{(i)}_L$,
$A_L^{m^{\prime}}g^{(i)}_L$, $h^{(i)}_L$, $A_L^{m^{\prime}} e^{(i)}_L$ for some $ i=1,\cdots,N, 1\leqslant m,m'\leqslant 2t+1.$ By the condition of Theorem~\ref{assump_of_th3.1_2}, one sees that for any $i,j$,
$$\langle p_{i,L},q_{j,L} \rangle=o\big(\mathrm{Var}_{\mathbb{P}_L}(\#_{X_L})\big). $$
It follows that
	\begin{align*}
 \mathrm{Tr}\big(Q_{1,L}Q_{2,L}\cdots Q_{t,L}\big)&=
		\mathrm{Tr}\big((p_{1,L}\otimes q_{1,L})(p_{2,L}\otimes q_{2,L}) \cdots (p_{t,L}\otimes q_{t,L})\big) \\
 &= \langle p_{1,L},q_{t,L} \rangle\langle p_{2,L},q_{1,L}\rangle\cdots \langle p_{t,L},q_{t-1,L} \rangle=o\big(\mathrm{Var}_{\mathbb{P}_L}(\#_{X_L})\big)^{t},
	\end{align*} which completing the proof.
\end{proof}


Now we turn to the proof of Theorem \ref{Th2}.

\noindent {\bf The proof of  Theorem \ref{Th2}.} We begin the proof with the argument for $V_{k}(L)$. By Lemmas~\ref{claim-of-tr-inconsistent} and \ref{claim-inner-prod-DAf-Ag}, we have that for a fixed positive integer $k$,
\begin{align*}
V_k(\#_{X_L }) &=\frac{1}{2}  \int_{X_L ^k} \mathrm{Tr}\big(K_L (x_1,x_2)K_L (x_2,x_3)\cdots K_L (x_k,x_1)\big) d\mu_L (x_1)\cdots d\mu_L (x_k) \nonumber\\
	&\quad= \lambda_L\mathrm{Tr}\big(A_L^k\big) + \mathrm{Tr}\big((A_L-A_L^2)p_{k,L}(A_L)\big) + o\big(\mathrm{Var}_{\mathbb{P}_L}(\#_{X_L})\big)^{\frac{k}{2}},
\end{align*}
where $p_{k,L}$ is the polynomial appeared in Lemmas~\ref{claim-of-tr-inconsistent}.  From the assumption  $$\|A_L-A_L^2\|_1
	 =o\big(\mathrm{Var}_{\mathbb{P}_L}(\#_{X_L})\big)^\delta, $$
one sees that
\begin{align*}
V_k(\#_{X_L })  =   \lambda_L\mathrm{Tr}\big(A_L) + o\big(\mathrm{Var}_{\mathbb{P}_L}(\#_{X_L})\big)^\delta + o\big(\mathrm{Var}_{\mathbb{P}_L}(\#_{X_L})\big)^{\frac{k}{2}}.
\end{align*}
Combing it with   \eqref{eq-cn-and-Vk},  we have that for 
  $ n>\max\{2\delta,3\},$
\begin{align*}
	c_n(\#_{X_L})&=\sum\limits_{k=2}^{n}(-1)^{k-1}(k-1)! v(n-1,k-1)(V_{k}(L)-V_{k-1}(L))\\
	&= v(n-1,1)\mathrm{Var}_{\mathbb{P}_L}(\#_{X_L})+ o\big(\mathrm{Var}_{\mathbb{P}_L}(\#_{X_L})\big)^{\delta}+ o\big(\mathrm{Var}_{\mathbb{P}_L}(\#_{X_L})\big)^{\frac{n}{2}}\\
 &= o\big(\mathrm{Var}_{\mathbb{P}_L}(\#_{X_L})\big)^{\frac{n}{2}}.
\end{align*}
The conclusion of Theorem~\ref{Th2} now follows from Lemma \ref{CLT-and-cn} and the above reasoning.\hfill $\Box$

\section{CLT for the Pfaffian $\mathrm{Sine}_4$  and $\mathrm{Sine}_1$ processes}\label{section-CLT-for-sine-beta}

In this section, we investigate the central limit theorems for the Pfaffian $\mathrm{Sine}_4$ and $\mathrm{Sine}_1$ processes. We begin by analyzing their properties, then proceed to verify their central limit theorems through characteristic function analysis  to explain the basic strategy.

\subsection{The Pfaffian $\mathrm{Sine}_4$-process}\label{CLT-for-sine4} Recall that
the Pfaffian $\mathrm{Sine}_4$-process $\mathbb{P}_{\mathrm{Sine}_4}$ is a Pfaffian point process on $\mathbb{R}$
with the matrix kernel $\mathbb{K}_{\mathrm{Sine}_4}$  with respect to the Lebesgue measure $dx$, where
\begin{align*}
	\mathbb{K}_{\mathrm{Sine}_4}(x,y)= ZK_{\mathrm{Sine}_4}(x,y),\quad
	K_{\mathrm{Sine}_4}(x,y)
	= \frac{1}{2}
	\left(\begin{array}{cc}
		S(x-y) & S^{\prime}(x-y)  \\
		IS(x-y) & S(x-y)
	\end{array}\right),
\end{align*}
while
\begin{align*}
	S(x) = \frac{\sin (\pi x) }{ \pi x}, \quad  IS(x) = \int_0^x S(t) dt.
\end{align*}
 Clearly  the functions $S$, $IS$ and $S^{\prime}$ are all bounded functions on $\mathbb{R}$.   Furthermore,
It is well known that
\begin{align*}
	S(\xi) = \frac{\sin (\pi \xi) }{ \pi \xi} = \big(\chi_{[-\frac{1}{2},
		\frac{1}{2}]}\big)^{\textasciicircum}(\xi) = \int_{\mathbb{R}} e^{-i 2\pi t \xi } \chi_{[-\frac{1}{2},\frac{1}{2}]}(t) dt.
\end{align*}
Therefore,  applying Plancherel theorem shows that $S\in L^2(\mathbb{R})$ and $\|S\|_{L^2(\mathbb{R})} = 1$.

 For $L>0$, let $I_L = (-L,L)$ and denote by $\#_L$  the number of the Pfaffian ${\mathrm{Sine}_4}$ process in $I_L.$  A straightforward calculation yields
\begin{align*}
    \mathbb{E}_{\mathrm{Sine}_4} [\#_L]= \frac{1}{2}\int_{I_L} S(0) dx = L.
\end{align*}
Following \cite{Costin1995}, we obtain the asymptotic variance as
\begin{align*}
\mathrm{Var}_{\mathrm{Sine}_4} (\#_L)
&= \frac{1}{2}\int_{I_L}S(0)dx-\int_{I_L^2}\mathrm{det}K_{{\mathrm{Sine}_4}}(x,y)dx dy\\
&= \frac{1}{2}\int_{I_L}S(0)dx-\frac{1}{4}\int_{I_L^2} [S^2(x-y)-S'(x-y) IS(x-y)] dx dy\\
&=\frac{1}{2}\int_{I_L}S(0)dx-\frac{1}{2}\int_{I_L^2}S^2(x-y)dx dy+\frac{1}{4}IS^2(2L)\\
&=\frac{1}{2\pi^2}\log L + O(1).
\end{align*}
The third equality follows from integration by parts applied to functions $S'(x-y)$ and $IS(x-y) $.

We next check the  FRCP for the Pfaffian ${\mathrm{Sine}_4}$ process.
\begin{proposition}\label{FRCPsin4}
For any $L>0,$   the Pfaffian ${\mathrm{Sine}_4}$ process on  $I_L$ has the finite rank commutator
property.
\end{proposition}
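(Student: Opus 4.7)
\emph{Plan.} My plan is to verify both FRCP identities explicitly for each fixed $L>0$ by taking $N=2$, $\alpha=1$, $\beta=0$ (so $\alpha+\beta=1$, matching $\lambda=1/2$), and to extract the rank-two corrections as pure boundary terms at $z=\pm L$ in a single integration by parts. The driving observation is the differential relations $IS'(x)=S(x)$ and $S'(x)=\tfrac{d}{dx}S(x)$, which let me relate the kernels $b,d$ to $a$ by differentiation/antidifferentiation.

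First I would record elementary symmetries. Since $S$ is even, $a(x,y)=S(x-y)=a(y,x)$, so $A=A^{\dag}$ on $L^{2}(I_L)$; hence the first FRCP identity reduces to the commutator $[A,B]$. Since $S$ and $IS$ are bounded on $\mathbb{R}$ (with $\|S\|_{\infty}=1$ and $IS$ bounded by the standard sine-integral estimate), their translates $S(\cdot \pm L)$ and $IS(\cdot \pm L)$ restricted to $I_L$ lie in $L^2(I_L,dx)$ and will supply the required $f^{(i)},g^{(i)},h^{(i)},e^{(i)}$.

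Next I would compute $BA$ on $I_L \times I_L$: writing $S(z-y)=\partial_z IS(z-y)$ and noting $\partial_z IS(x-z)=-S(x-z)$, integration by parts in $z\in(-L,L)$ produces the interior integral $AB(x,y)$ plus boundary terms at $z=\pm L$. Using $IS(-w)=-IS(w)$ one gets
\[
A^{\dag}B-BA \;=\; AB-BA \;=\; -\,IS(\cdot-L)\otimes IS(L-\cdot)\;-\;IS(\cdot+L)\otimes IS(L+\cdot),
\]
a rank-2 operator, which furnishes $f^{(i)},g^{(i)}$ ($i=1,2$). An entirely parallel computation for $DB$, now integrating by parts via $S'(x-z)=-\partial_z S(x-z)$, yields
\[
DB - A^{2} \;=\; -\,S(\cdot-L)\otimes IS(L-\cdot)\;-\;S(\cdot+L)\otimes IS(L+\cdot),
\]
again a rank-2 operator, matching the second FRCP equation with $\alpha=1$, $\beta=0$. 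This provides the data $\{2,f^{(i)},g^{(i)},h^{(i)},e^{(i)},1,0\}$ and completes the verification.

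There is no serious obstacle here: the identities are exact for each finite $L$, requiring no spectral analysis or limiting argument; the only care needed is in tracking the four boundary products and presenting them as honest $L^{2}$ rank-one operators. The choice $\alpha=1,\beta=0$ is forced by the fact that integration by parts on $DB$ returns $A^{2}$ rather than $A$, which is the natural value because the bulk symbol of $A$ is the indicator $\chi_{[-1/2,1/2]}$ and hence $A$ is \emph{asymptotically} a projection on $I_L$ as $L\to\infty$.
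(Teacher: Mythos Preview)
Your proposal is correct and follows essentially the same route as the paper: integrate by parts in the $z$-variable over $I_L=(-L,L)$ using $\partial_z IS(z-y)=S(z-y)$ and $\partial_z S(x-z)=-S'(x-z)$, collect the two boundary terms at $z=\pm L$, and read off the rank-$2$ corrections with $\alpha=1,\beta=0$. Up to the harmless rewriting $IS(\cdot-L)=-IS(L-\cdot)$ and $S(\cdot\pm L)=S(L\pm\cdot)$ (oddness of $IS$, evenness of $S$), your rank-one pieces coincide with those in the paper's \eqref{commutate_ALBL} and \eqref{commutate_DLBL}.
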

\begin{proof}
Let $A_L=A_L^{\dag}, B_L, D_L$ be the   integral operators on $L^2(I_L)$  with the integral kernel
\begin{align*}
    &A_L(x,y) = A_L^{\dag}(x,y) =\chi_{I_L}(x)S(x-y)\chi_{I_L}(y),\\
    &B_L(x,y) = \chi_{I_L}(x)IS(x-y)\chi_{I_L}(y),\\
    &D_L(x,y) = \chi_{I_L}(x)S^{\prime}(x-y)\chi_{I_L}(y),
\end{align*}
respectively.
It follows from the Fourier transform for $S(x),S'(x)$  that  $0\leqslant A_L\leqslant1,$ $\|D_L\|\leqslant \pi.$ However, $\{B_L\}_{L>0}$ are not uniformly bounded.

From the standard argument for the   integral operators,  the commutate  $A_LB_L-B_LA_L$ is also a integral operator with
the   integral kernel
\begin{align*}
    (A_LB_L-B_LA_L)(x,y)
    &= \chi_{I_L}(x) \,\chi_{I_L}(y)\,\int_{I_L} \big(S(x-z)IS(z-y)-IS(x-z)S(z-y)\big) dz\\
    &=\big(IS(L-x)IS(L-y)-IS(-L-x)IS(-L-y)\big)\chi_{I_L}(x)\chi_{I_L}(y).
    \end{align*}
  This implies that the commutate
   \begin{equation}\label{commutate_ALBL}
     A_LB_L-B_LA_L  =   IS(L-\cdot)\chi_{I_L}(\cdot) \otimes IS(L-\cdot)\chi_{I_L}(\cdot)- IS(L+\cdot)\chi_{I_L}(\cdot) \otimes IS(L+\cdot)\chi_{I_L}(\cdot) \\
   \end{equation}
    is a rank-two operator on $L^{2} (I_L)$.

Similarly, the integral operator $D_LB_L-A_L^2$ has the integral kernel
\begin{align*}
     (D_LB_L-A_L^2)(x,y)
    &=\chi_{I_L}(x)\chi_{I_L}(y) \int_{I_L} \big(S^{\prime}(x-z)IS(z-y)-S(x-z)S(z-y)\big) dz \\
    &=\big(-S(L-x)IS(L-y)+S(-L-x)IS(-L-y)\big)\chi_{I_L}(x)\chi_{I_L}(y).
\end{align*}
It follows that
   \begin{equation}\label{commutate_DLBL}
     D_LB_L- A_L^2  =   -S(L-\cdot)\chi_{I_L}(\cdot)\otimes IS(L-\cdot)\chi_{I_L}(\cdot)-  S(L+\cdot)\chi_{I_L}(\cdot)\otimes IS(L+\cdot)\chi_{I_L}(\cdot) \\
   \end{equation}
    is also a rank-two operator on $L^{2} (I_L)$, completing the proof of the proposition.
\end{proof}
To verify the remaining conditions of Theorem~\ref{Th2}, we remark the following lemma.

\begin{lemma}\label{lemma-an-easy-condition-for-Th}
Fixed an integer $N$. Let $\{\mathbb{P}_L\}_{L\geqslant 0}$ be a family of Pfaffian point processes having FRCP with $\{N,f^{(i)}_{L},g^{(i)}_{L},h^{(i)}_{L},e^{(i)}_{L},\alpha_L,\beta_L\}$ with the matrix-valued kernel function $\mathbb{K}_L(x,y)=ZK_L(x,y),$
	$$K_L(x,y)= \lambda_L \left({\begin{array}{cc}a_L(x,y) &d_L(x,y)\\b_L(x,y)&a_L(y,x)\end{array}} \right) (\lambda_L=\frac{1}{2}\, or\, 1),$$
over the domain $X_L$ .  Suppose  that
 $\mathrm{Var}_{\mathbb{P}_L} (\#_{X_L}) \rightarrow +\infty$ as $L\rightarrow +\infty$, and
  $A_L,B_L,D_L,A^{\dag}_L$ are bounded operators on $L^2(X_L)$  and $M=\sup\limits_L (\|A_L\|+\|A^{\dag}_L\|+\|D_L\|)<+\infty.$

If there are the  decompositions
\begin{align*}
f_L^{(i)} = f_{L,1}^{(i)}+ f_{L,2}^{(i)},\quad
g_L^{(i)} = g_{L,1}^{(i)}+ g_{L,2}^{(i)},\quad
e_L^{(i)} = e_{L,1}^{(i)}+ e_{L,2}^{(i)}, \,i=1,\cdots, N
\end{align*}
in  $L^2(X_L)$  such that
\begin{align*}
\{f_{L,2}^{(i)},~g_{L,2}^{(i)},~e_{L,2}^{(i)},~h_{L}^{(i)},~
A_L^{\dag} f_{L,1}^{(i)}-f_{L,1}^{(i)},~
A_L g_{L,1}^{(i)}-g_{L,1}^{(i)},~
A_L e_{L,1}^{(i)}-e_{L,1}^{(i)},~
D_Lf_{L,1}^{(i)}: i=1,\cdots, N,L\geqslant 0\}
\end{align*}
are uniformly  bounded by $M^{\prime}$ with respect to $L^2$-norm,    and
\begin{align*}
\langle D_L f_{L,1}^{(i)},g_{L,1}^{(j)}\rangle,~
\langle D_L f_{L,1}^{(i)},e_{L,1}^{(j)}\rangle,~
\langle h_L^{(i)}, g_{L,1}^{(j)}\rangle,~
\langle h_L^{(i)}, e_{L,1}^{(j)}\rangle
= o\big(\mathrm{Var}_{\mathbb{P}_L}(\#_{X_L})\big),
\end{align*}then the statement  of  Theorem \ref{Th2} $\mathrm{(iv)}$ holds.
\end{lemma}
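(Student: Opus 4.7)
The plan is to use the prescribed splittings together with the telescoping identity
\[
A_L^{\dag m} f_{L,1}^{(i)} - f_{L,1}^{(i)} = \sum_{k=0}^{m-1} A_L^{\dag k}\bigl(A_L^\dag f_{L,1}^{(i)} - f_{L,1}^{(i)}\bigr),
\]
and its analogues for $A_L^n g_{L,1}^{(j)} - g_{L,1}^{(j)}$ and $A_L^n e_{L,1}^{(j)} - e_{L,1}^{(j)}$, to expand each of the four inner products in Theorem~\ref{Th2}(iv) as one of the four hypothesized inner products plus a sum of inner products of uniformly $L^2$-bounded vectors, each contributing $O(1)=o(\mathrm{Var}_{\mathbb{P}_L}(\#_{X_L}))$. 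Since $m,n$ are fixed and $\sup_L(\|A_L\|+\|A_L^\dag\|)<\infty$, the remainder above has $L^2$-norm at most $m(\sup_L\|A_L^\dag\|)^{m-1}M'$ uniformly in $L$, and the same kind of bound holds for the $g$- and $e$-remainders as well as for $A_L^{\dag m} f_{L,2}^{(i)}$, $A_L^n g_{L,2}^{(j)}$, $A_L^n e_{L,2}^{(j)}$.

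The inner products $\langle h_L^{(i)},A_L^n g_L^{(j)}\rangle$ and $\langle h_L^{(i)},A_L^n e_L^{(j)}\rangle$ then follow immediately: writing $A_L^n g_L^{(j)}=g_{L,1}^{(j)}+(A_L^n g_{L,1}^{(j)}-g_{L,1}^{(j)})+A_L^n g_{L,2}^{(j)}$ and pairing each piece with the uniformly bounded $h_L^{(i)}$ produces the hypothesized $\langle h_L^{(i)},g_{L,1}^{(j)}\rangle=o(\mathrm{Var}_{\mathbb{P}_L}(\#_{X_L}))$ plus two bounded-bounded pairings, each $O(1)$. For the remaining two, first note that
\[
D_L A_L^{\dag m} f_L^{(i)} = D_L f_{L,1}^{(i)} + D_L\bigl(A_L^{\dag m} f_{L,1}^{(i)}-f_{L,1}^{(i)}\bigr) + D_L A_L^{\dag m}f_{L,2}^{(i)}
\]
is uniformly $L^2$-bounded, since $D_L f_{L,1}^{(i)}$ is bounded by hypothesis and the other two summands arise by applying the bounded operator $D_L$ to uniformly bounded elements. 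Expanding $\langle D_L A_L^{\dag m}f_L^{(i)},A_L^n g_L^{(j)}\rangle$ using the same three-term decomposition on the right produces nine pairings: the principal term $\langle D_L f_{L,1}^{(i)},g_{L,1}^{(j)}\rangle=o(\mathrm{Var}_{\mathbb{P}_L}(\#_{X_L}))$, a batch of bounded-bounded pairings each $O(1)$, and a batch of ``mixed'' pairings where a uniformly bounded vector is paired against the possibly unbounded $g_{L,1}^{(j)}$.

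The main obstacle is precisely this last batch, for which a naive Cauchy--Schwarz bound only gives $O(\|g_{L,1}^{(j)}\|)$, which need not be $o(\mathrm{Var}_{\mathbb{P}_L}(\#_{X_L}))$. To close the argument I would iterate the identity
\[
\langle\phi,g_{L,1}^{(j)}\rangle = \langle A_L^*\phi,g_{L,1}^{(j)}\rangle - \langle\phi,A_L g_{L,1}^{(j)}-g_{L,1}^{(j)}\rangle
\]
(whose second term is bounded-bounded, hence $O(1)$, since $A_L g_{L,1}^{(j)}-g_{L,1}^{(j)}$ is uniformly bounded), combined with the telescoping expression $D_L\bigl(A_L^{\dag m} f_{L,1}^{(i)}-f_{L,1}^{(i)}\bigr) = \sum_{k=0}^{m-1} D_L A_L^{\dag k}\bigl(A_L^\dag f_{L,1}^{(i)}-f_{L,1}^{(i)}\bigr)$, to collapse each mixed pairing into a multiple of the hypothesized $\langle D_L f_{L,1}^{(i)},g_{L,1}^{(j)}\rangle$ modulo $O(1)$ corrections. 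The inner products involving $e_{L,1}^{(j)}$ are treated identically, using $A_L e_{L,1}^{(j)}-e_{L,1}^{(j)}$ bounded and the hypothesis $\langle D_L f_{L,1}^{(i)},e_{L,1}^{(j)}\rangle=o(\mathrm{Var}_{\mathbb{P}_L}(\#_{X_L}))$.
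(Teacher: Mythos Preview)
Your telescoping decomposition and the three-term splitting of $D_L A_L^{\dag m}f_L^{(i)}$ and of $A_L^n g_L^{(j)}$ is precisely the paper's argument; the paper then simply asserts that the pairing equals $\langle D_L f_{L,1}^{(i)},g_{L,1}^{(j)}\rangle+O(1)$ and moves on, without isolating or discussing the mixed terms $\langle[\text{bounded}],g_{L,1}^{(j)}\rangle$ at all. So for the first two inner products and for the ``easy'' eight of the nine cross-terms you have reproduced the paper's proof exactly, and you have in fact gone beyond it by flagging the remaining cross-term as the delicate point.

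That said, your proposed remedy does not close the gap. Iterating the identity $\langle\phi,g_{L,1}^{(j)}\rangle=\langle A_L^*\phi,g_{L,1}^{(j)}\rangle+O(1)$ only trades one bounded $\phi$ for another bounded vector $A_L^*\phi$; nothing collapses, since no finite number of applications of $A_L^*$ to $D_L A_L^{\dag k}\bigl(A_L^\dag f_{L,1}^{(i)}-f_{L,1}^{(i)}\bigr)$ or to $D_L A_L^{\dag m}f_{L,2}^{(i)}$ produces a scalar multiple of $D_L f_{L,1}^{(i)}$. With only the hypotheses listed there is no a~priori control on $\langle\psi,g_{L,1}^{(j)}\rangle$ for a generic bounded $\psi$, because $\|g_{L,1}^{(j)}\|$ is allowed to grow without bound. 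In the paper's actual applications the difficulty evaporates for a different reason: the kernel $d_L$ is real and antisymmetric, so $D_L^*=-D_L$, and each $g_{L,1}^{(j)},e_{L,1}^{(j)}$ coincides up to sign with some $f_{L,1}^{(i')}$; hence $\langle D_L\psi,g_{L,1}^{(j)}\rangle=-\langle\psi,D_L g_{L,1}^{(j)}\rangle$ becomes a bounded--bounded pairing (since $D_L f_{L,1}^{(i')}$ is uniformly bounded by hypothesis) and is $O(1)$. Under the lemma's stated hypotheses alone, neither the paper's one-line claim nor your iteration scheme supplies the missing estimate.
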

\begin{proof}
By the assumption of the Lemma, we have that
  for nonnegative integer   $m,n$
\begin{align*}
	D_L A_L^{{\dag}\, m} f^{(i)}_{L}
	&= D_L A_L^{{\dag}\, m} (f^{(i)}_{L,1} + f^{(i)}_{L,2})\\
	&=D_L f_{L,1}^{(i)}+D_L A_L^{{\dag}\, m}f^{(i)}_{L,2}+ \sum\limits_{s=0}^{m-1} D_L A_L^{{\dag}\,s} (A_Lf^{(i)}_{L,1} - f^{(i)}_{L,1}),
\end{align*}
and
$$A_L^{n} g^{(j)}_{L} = g_{L,1}^{(j)}+ A_L^{n}g^{(j)}_{L,2}+ \sum\limits_{s=0}^{n-1} A_L^{s} (A_Lg^{(j)}_{L,1} - g^{(j)}_{L,1}). $$
It is easy to see that
\begin{align*}
	\|D_L f_{L,1}^{(i)}\|\leqslant M^{\prime},\quad
	\big\| D_L A_L^{{\dag}\, m}f^{(i)}_{L,2}+ \sum\limits_{s=0}^{m-1} D_L A_L^{{\dag}\,s} (A_Lf^{(i)}_{L,1} - f^{(i)}_{L,1})\big\|\leqslant M^{m+1}M^{\prime}+m M^{s+1}M^{\prime},
\end{align*}
and
$$\big\|A_L^{n}g^{(j)}_{L,2}+ \sum\limits_{s=0}^{n-1} A_L^{s} (A_Lg^{(j)}_{L,1} - g^{(j)}_{L,1})\big\|\leqslant M^nM'+nM^sM'.$$
This implies that
\begin{align*}
&  \langle D_L A_L^{{\dag}\, m} f^{(i)}_{L},A_L^{n} g^{(j)}_{L}\rangle\\=&\langle D_L f_{L,1}^{(i)}+[D_L A_L^{{\dag}\, m}f^{(i)}_{L,2}+ \sum\limits_{s=0}^{m-1} D_L A_L^{{\dag}\,s} (A_Lf^{(i)}_{L,1} - f^{(i)}_{L,1})], g_{L,1}^{(j)}+ [ A_L^{n}g^{(j)}_{L,2}+ \sum\limits_{s=0}^{n-1} A_L^{s} (A_Lg^{(j)}_{L,1} - g^{(j)}_{L,1})]\rangle\\
=&\langle D_L f_{L,1}^{(i)},g_{L,1}^{(j)}\rangle+O(1)\\
=& o\big(\mathrm{Var}_{\mathbb{P}_L}(\#_{X_L})\big).
\end{align*}
The same argument yields analogous asymptotics for the other terms in Theorem \ref{Th2} (iv), completing the proof.
\end{proof}
To verify the decomposition required by Lemma \ref{lemma-an-easy-condition-for-Th} for the Pfaffian ${\mathrm{Sine}_4}$ process, the following crucial  observation is essential.
\begin{lemma}\label{lemma-2-norm-of-IS-1/2} We have that
    $\big\|(IS-\frac{1}{2})\chi_{(0,+\infty)}\big\|_{L^2(\mathbb{R})}\leqslant 1.$
\end{lemma}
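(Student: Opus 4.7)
The plan is to prove a sharper result, computing the exact value $\frac{1}{2\pi^2}$, by Plancherel's theorem after an odd extension. Since $S$ is even on $\mathbb{R}$, the function $IS(x)=\int_0^x S(t)\,dt$ is odd, so
\[
h(x) \;:=\; IS(x)-\tfrac{1}{2}\sgn(x)
\]
is odd on $\mathbb{R}\setminus\{0\}$ and agrees with $IS(x)-\tfrac{1}{2}$ on $(0,+\infty)$. Consequently
\[
\|h\|_{L^2(\mathbb{R})}^2 \;=\; 2\,\bigl\|(IS-\tfrac{1}{2})\chi_{(0,+\infty)}\bigr\|_{L^2(\mathbb{R})}^2,
\]
so it suffices to prove $\|h\|_{L^2(\mathbb{R})}^2 \leqslant 2$.

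First I would verify $h \in L^2(\mathbb{R})$: since $IS$ is bounded, $h$ is bounded on every bounded interval, and one integration by parts in $IS(x)-\tfrac{1}{2} = -\int_x^\infty \frac{\sin(\pi t)}{\pi t}\,dt$ gives $|h(x)|=O(1/|x|)$ as $|x|\to\infty$. Next, using the Fourier convention $\hat{f}(\xi)=\int_{\mathbb{R}} f(x) e^{-2\pi i x\xi}\,dx$, I would compute $\hat{h}$ from the distributional identity $h' = S - \delta_0$, which yields
\[
2\pi i \xi \cdot \hat{h}(\xi) \;=\; \hat{S}(\xi)-1 \;=\; \chi_{[-1/2,1/2]}(\xi)-1.
\]
Because $\hat{h}$ is an $L^2$ function, it cannot carry a delta mass at the origin, and this equation pins down
\[
\hat{h}(\xi) \;=\; -\frac{1}{2\pi i\xi}\,\chi_{\{|\xi|>1/2\}}(\xi).
\]

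Finally, Plancherel's theorem gives
\[
\|h\|_{L^2(\mathbb{R})}^2 \;=\; \|\hat{h}\|_{L^2(\mathbb{R})}^2 \;=\; \int_{|\xi|>1/2}\frac{d\xi}{4\pi^2\xi^2} \;=\; \frac{1}{\pi^2},
\]
so $\bigl\|(IS-\tfrac{1}{2})\chi_{(0,+\infty)}\bigr\|_{L^2(\mathbb{R})}^2 = \frac{1}{2\pi^2} \leqslant 1$, which is well below the asserted bound. The only delicate step is the distributional Fourier inversion, specifically excluding a delta component of $\hat{h}$ at $\xi=0$; this is justified precisely by the $L^2$ membership of $h$ verified above. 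If one prefers to avoid the distributional argument, the bound can instead be obtained more crudely by splitting $\int_0^\infty (IS-\tfrac12)^2\,dx$ at $x=1$, using monotonicity of $IS$ on $(0,1)$ to get $|IS(x)-\tfrac12|\leqslant \tfrac12$ there, and using $|IS(x)-\tfrac12|\leqslant 2/(\pi^2 x)$ for $x\geqslant 1$ (via the same integration by parts), giving $\|(IS-\tfrac12)\chi_{(0,+\infty)}\|_{L^2}^2 \leqslant \tfrac14 + 4/\pi^4 \leqslant 1$.
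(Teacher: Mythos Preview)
Your proof is correct, and it takes a genuinely different route from the paper. The paper's argument is purely elementary: it splits $\int_0^\infty |IS(x)-\tfrac12|^2\,dx$ at $x=1$, bounds the near part by $\tfrac14$, and for $x>1$ uses an alternating-series estimate on $\int_x^\infty \frac{\sin(\pi u)}{\pi u}\,du$ to obtain $|IS(x)-\tfrac12|\leqslant \tfrac{2}{\pi x}$, concluding with $\tfrac14+\tfrac{4}{\pi^2}\leqslant 1$. Your main argument instead passes to the odd extension $h(x)=IS(x)-\tfrac12\sgn(x)$, reads off $\hat h$ from the distributional identity $h'=S-\delta_0$, and computes the exact value $\|(IS-\tfrac12)\chi_{(0,+\infty)}\|_{L^2}^2=\tfrac{1}{2\pi^2}$ by Plancherel. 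This is sharper and conceptually clean; the only point requiring care, which you handle correctly, is that the equation $2\pi i\xi\,\hat h(\xi)=\chi_{[-1/2,1/2]}(\xi)-1$ determines $\hat h$ up to a multiple of $\delta_0$, and that multiple must vanish because $h\in L^2$. Your ``crude'' alternative at the end is essentially the paper's own argument, except that you obtain the tighter tail bound $|IS(x)-\tfrac12|\leqslant \tfrac{2}{\pi^2 x}$ via one integration by parts rather than $\tfrac{2}{\pi x}$ via alternating-series grouping; either suffices for the lemma as stated.
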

\begin{proof} When $0< x\leqslant 1$, it is clear that
  $IS(x) = \int_0^x S(u) du\leqslant 1.$
When  $x>1,$  by the fact that
\begin{align*}
     \int_0^{+\infty} S(x)dx =   \int_0^{+\infty} \frac{\sin(\pi x)}{\pi x}dx = \frac{1}{2},
\end{align*}
we have that
\begin{align*}
    \frac{1}{2} - IS(x)
    &=  \int_x^{+\infty} \frac{sin(\pi u)}{\pi u} du\\
    &= \int_x^{\lfloor x\rfloor+1} \frac{sin(\pi u)}{\pi u} du + \sum\limits_{k=1}^{+\infty} \int_{\lfloor x\rfloor+k}^{\lfloor x\rfloor+k+1} \frac{sin(\pi u)}{\pi u} du.\\
    &\leqslant \int_x^{\lfloor x\rfloor+1} \big|\frac{sin(\pi u)}{\pi u}\big| du + \int_{\lfloor x\rfloor+1}^{\lfloor x\rfloor+2}  \big|\frac{sin(\pi u)}{\pi u}\big| du\\
    &\leqslant \frac{2}{\pi x},
\end{align*}
 where the inequality in the third line originates from the treatment of alternating series. This implies that
\begin{align*}
    \big\|(IS-\frac{1}{2})\chi_{(0,+\infty)}\big\|_{L^2(\mathbb{R})}^2
    &= \Big(\int_0^1 + \int_1^{+\infty}\Big) \big|IS(x)-\frac{1}{2})\big|^2 dx\\
    &\leqslant \frac{1}{4} + \int_1^{+\infty}\big(\frac{2}{\pi x}\big)^2dx
     \leqslant 1,
\end{align*}
which attains Lemma~\ref{lemma-2-norm-of-IS-1/2}.
\end{proof}


We now proceed to establish the central limit theorem for the Pfaffian  $\mathrm{Sine4}$ process  (Theorem \ref{Th2}). Conditions (i) and (ii) of Theorem \ref{Th2} are clearly satisfied.
From \cite{Costin1995,Soshnikov2000fluctuations}, we have the asymptotic estimate as $L\to+\infty,$
$$\|A_L-A_L^2\|_1=O(\log L).$$
To complete the proof, it remains to verify condition (iv) of Theorem \ref{Th2}. This follows directly from Lemma \ref{lemma-an-easy-condition-for-Th}.

Indeed, by considering the Fourier transform of
$S$, we observe that
$$\big\|S(L\pm\cdot) \big\|_{L^2(\mathbb{R})} =\bigg[\int_{\mathbb R}\big|\frac{sin \pi x}{\pi x}\big|^2 dx\bigg]^{\frac{1}{2}}=1,$$
Furthermore, Lemma~\ref{lemma-2-norm-of-IS-1/2} yields $$ \Big\|\big(IS(L\pm\cdot)-\frac{1}{2}\big)\chi_{I_L}\Big\|_{L^2(\mathbb{R})} \leqslant 1.$$
It follows that
\begin{align*}
    \|D_L(\chi_{I_L})\|_{L^2(\mathbb{R})}
    &=\|\chi_{I_L}(\cdot)\int_{I_L}S^{\prime}(\cdot-y)dy\,\|_{L^2(\mathbb{R})}\\
    &=\|\chi_{I_L}(\cdot) \big(-S(L-\cdot)+S(L+\cdot)\big)\,\|_{L^2(\mathbb{R})}\leqslant 2, \end{align*}
    and
 \begin{align*}   \|A_L(\chi_{I_L}) -\chi_{I_L}\| _{L^2(\mathbb{R})}
    &=\big\|(\int_{I_L}S(\cdot-y)dy-1)\,\chi_{I_L}(\cdot)\big\|_{L^2(\mathbb{R})}\\
    &=\|\big(IS(L-\cdot)+IS(L+\cdot)-1\big)\chi_{I_L}(\cdot)\|_{L^2(\mathbb{R})}\leqslant 2.
\end{align*}
Furthermore,  for any $  L\geqslant 0$
\begin{align*}\big\langle D_L\chi_{I_L},\chi_{I_L}\big\rangle_{L^2(\mathbb{R})}&=
\big\langle \chi_{I_L}(\cdot) \big(-S(L-\cdot)+S(L+\cdot)\big),\chi_{I_L}\big\rangle_{L^2(\mathbb{R})}\\
& 
= \int_{-
L}^L [S(L+ x)-S(L-x)] dx \\
&= 0.
\end{align*}
%
By \eqref{commutate_ALBL} and \eqref{commutate_DLBL},  the Pfaffian $\mathrm{Sine}_4$ process over $I_L$ has the FRCR with $(2, f_L^{(i)}, g_L^{(i)}, h_L^{(i)},e_L^{(i)}, 1,0)$, where
$$f_L^{(1)}=g_L^{(1)}=e_L^{(1)}=\chi_{I_L}(\cdot)IS(L-\cdot), \,\, -f_L^{(2)}=g_L^{(2)}=e_L^{(2)}=\chi_{I_L}(\cdot) IS(L+\cdot)$$
and
$$h_L^{(1)}=-\chi_{I_L}(\cdot)S(L-\cdot), \,\, h_L^{(2)}=- \chi_{I_L}(\cdot)S(L+\cdot).$$

Therefore, by the decomposition
$$ IS(L\pm\cdot)\chi_{I_L}(\cdot)=(IS(L\pm\cdot)-\frac{1}{2})\chi_{I_L}(\cdot)+\frac{1}{2}\chi_{I_L}(\cdot)$$
and the argument in Lemma \ref{lemma-an-easy-condition-for-Th}, we have that
for any $i,j = 1,2 ,$ and $m,n\in \mathbb{Z}_{\geqslant0},$
$$
	\langle D_L A_L^{{\dag}\, m} f^{(i)}_{L},  {A_L}^{ n} g^{(j)}_{L}\rangle,
	\langle D_L A_L^{{\dag}\, m} f^{(i)}_{L},  {A_L}^{ n} e^{(j)}_{L}\rangle,
	\langle h^{(i)}_{L}, {A_t}^{ n} g^{(j)}_{L}\rangle,
	\langle h^{(i)}_{L},  {A_t}^{ n} e^{(j)}_{L}\rangle  = O(1)
$$
as $L\to+\infty$, which verify condition (iv) of Theorem \ref{Th2}.
%

Applying Theorem~\ref{Th2}, we obtain the CLT for the Pfaffian $\mathrm{Sine}_4$-process over $I_L$. That is, as $L\to+\infty$,
\begin{align*}
	\frac{\#_{I_L}-\mathbb{E}_{\mathrm{Sine}_4} [\#_{I_L}] }{\sqrt{\mathrm{Var}_{\mathrm{Sine}_4}(\#_{I_L})}} \xrightarrow{d} N(0,1).
\end{align*}


\subsection{The Pfaffian $\mathrm{Sine}_1$ process}
We now briefly discuss the central limit theorem for the Pfaffian $\mathrm{Sine}_1$ process, as its proof closely follows that of the Pfaffian $\mathrm{Sine}_4$-process.

Recall that the Pfaffian  $\mathrm{Sine}_1$ process, denoted $\mathbb{P}_{\mathrm{Sine}_1}$,  is a Pfaffian point process on $\mathbb{R}$
with the matrix kernel $\mathbb{K}_{\mathrm{Sine}_1}$ (with respect to the Lebesgue measure
$dx$), where
\begin{align*}
	\mathbb{K}_{\mathrm{Sine}_1}(x,y)= ZK_{\mathrm{Sine}_1}(x,y),\quad
	K_{\mathrm{Sine}_1}(x,y)
	=
	\left(\begin{array}{cc}
		S(x-y) & S^{\prime}(x-y)  \\
		IS(x-y)-\frac{1}{2}\sgn(x-y) & S(x-y)
	\end{array}\right).
\end{align*}

For $L>0$, let $I_L = (-L,L)$ and $\#_L = \#_{\mathrm{Sine}_1}I_L$ . A straightforward computation yields
$$\mathbb{E}_{\mathrm{Sine}_1} [\#_L]=\int_{I_L} S(0) dx = 2L.$$
Moreover, by \cite{Costin1995}, as $L\to+\infty$,  the variance satisfies
\begin{align*}
	\mathrm{Var}_{\mathrm{Sine}_1} (\#_L)
	&= \int_{I_L}S(0)dx-\int_{I_L^2}\mathrm{det}K_{{\mathrm{Sine}_1}}(x,y)dx dy\\
	&=2L-2\int_{I_L^2}S(x-y)^2dx dy+IS(2L)^2-IS(2L)\\
	&=\frac{2}{\pi^2}\log L + O(1).
\end{align*}

For $L\geqslant 0,$ denote $A_L=A_L^{\dag}, B_L,D_L$ be the integral operators with the integral kernels
\begin{align*}
	&A_L(x,y) = A_L^{\dag}(x,y) = \chi_{I_L}(x)S(x-y)\chi_{I_L}(y),\\
	&B_L(x,y) = \chi_{I_L}(x)\big(IS(x-y)-\frac{1}{2}\sgn(x-y)\big)\chi_{I_L}(y),\\
	&D_L(x,y) = \chi_{I_L}(x)S^{\prime}(x-y)\chi_{I_L}(y),
\end{align*}
respectively.
By the argument in Subsection 3.1, we have that $\|A_L\|\leqslant1,$ $\|D_L\| \leqslant \pi$ and
	$$\|A_L-A_L^2\|_1=O(\log L) $$
as $L\to+\infty.$

A careful computation shows that $A_LB_L-B_LA_L$, $D_LB_L+A_L-A_L^2$ are the integral operators with the integral kernels
\begin{align*}
	(A_LB_L-B_LA_L)(x,y)
	 =\chi_{I_L}(x)\chi_{I_L}(y)\big(IS(L-x)IS(L-y)-IS(L+x)IS(L+y) \\
	  +\frac{1}{2} ( IS(L+x) -IS(L-x)+ IS(L+y) - IS(L-y) \big),
\end{align*}
and
\begin{align*}
	(D_LB_L+A_L-A_L^2)(x,y)
	 =-\chi_{I_L}(x)\chi_{I_L}(y)\big(S(L-x)IS(L-y)+S(L+x)IS(L+y) \\
	 +\frac{1}{2} (S(L-x)+S(L+x)\big),
\end{align*}
respectively.
It follows that the Pfaffian  $\mathrm{Sine}_1$-process over $I_L$ have FRCP with   $(4,  f_L^{(i)},  g_L^{(i)}, h_L^{(i)}, e_L^{(i)},  1, 1)$, where
$$f_L^{(1)}=g_L^{(1)}= \chi_{I_L}(\cdot)IS(L-\cdot), \,\, -f_L^{(2)}=g_L^{(2)}= \chi_{I_L}(\cdot)IS(L+\cdot),$$
$$f_L^{(3)}=\frac{1}{2} \chi_{I_L}(\cdot)( IS(L+\cdot) -IS(L-\cdot)),  g_L^{(3)}=\chi_{I_L}(\cdot);  f_L^{(4)}=\chi_{I_L}(\cdot), g_L^{(4)}=\frac{1}{2}\chi_{I_L}(\cdot) ( IS(L+\cdot) -IS(L-\cdot));$$
and
$$h_L^{(1)}=- \chi_{I_L}(\cdot) S(L-\cdot), \,\, h_L^{(2)}=- \chi_{I_L}(\cdot)  S(L+\cdot),\,\, h_L^{(3)}=-\frac{1}{2} \chi_{I_L}(\cdot)  ( S(L-\cdot)+S(L+\cdot)), \,\,  h_L^{(4)} =0,$$
$$e_L^{(1)}=\chi_{I_L}(\cdot)IS(L-\cdot), \,\,   e_L^{(2)}=\chi_{I_L}(\cdot)IS(L+\cdot), \,\, e_L^{(3)}=\chi_{I_L}(\cdot),  \,\, e_L^{(4)}=0.$$
Using the decomposition
$$IS(L\pm\cdot)=\chi_{I_L}(\cdot)(IS(L\pm\cdot)-\frac{1}{2})+ \frac{1}{2} \chi_{I_L}(\cdot)$$
and the fact proved in the above subsection  that
$$\|D_L(\chi_{I_L})\|_{L^2(\mathbb{R})}, \|A_L(\chi_{I_L}) -\chi_{I_L}\| _{L^2(\mathbb{R})}\leqslant 2,$$
and
$$ \big\langle D_L\chi_{I_L},\chi_{I_L}\big\rangle_{L^2(\mathbb{R})}=0.$$
it follows from the argument of Lemma \ref{lemma-an-easy-condition-for-Th} that
for $   i,j=1,\cdots,4$ and nonnegative integers $m,m'$,
\begin{align*}
	&\langle D_L A_L^m f_L^{(i)}, A_L^{m^{\prime}} g^{(j)}_L\rangle_{L^2(\mathbb{R})},\quad
		\langle D_L A_L^m f_L^{(i)}, A_L^{m^{\prime}} e^{(j)}_L\rangle_{L^2(\mathbb{R})}  = O(1),\\
	&\langle h_L^{(i)}, A_L^{m^{\prime}} g^{(j)}_L\rangle_{L^2(\mathbb{R})},\quad
		\langle h_L^{(i)}, A_L^{m^{\prime}} e^{(j)}_L\rangle_{L^2(\mathbb{R})} =O(1).
\end{align*}as $L\to+\infty,$  which verify condition (iv) of Theorem \ref{Th2}.

Then the conditions of \eqref{equ-in-assup4} are verified.
Applying Theorem~\ref{Th2}, we obtain the central limit theorem for the Pfaffian $\mathrm{Sine}_1$-process over $I_L$. That is, as $L\rightarrow+\infty$,
\begin{align*}
	\frac{\#_{I_L}-\mathbb{E}_{\mathrm{Sine}_1} [\#_{I_L}] }{\sqrt{\mathrm{Var}_{\mathrm{Sine}_1}(\#_{I_L})}} \xrightarrow{d} N(0,1).
\end{align*}

\subsection{CLT for step functions}
The remainder of this section is devoted to proving the central limit theorem for step functions (i.e., finite linear combinations of indicator functions)  in the context of the   Pfaffian ${\mathrm{Sine}_4}$ and ${\mathrm{Sine}_1}$ processes.

We begin with a general preparatory result.

For $L\geqslant  0$, let $\{\mathbb{P}_L\}_{L\geqslant 0}$ be a family of Pfaffian point processes with the matrix-valued kernel $\mathbb{K}_L(x,y)=ZK_L(x,y),$ where
$$K_L(x,y)=\lambda_L\left({\begin{array}{cc}a_L(x,y) &d_L(x,y)\\b_L(x,y)&a_L(y,x)\end{array}} \right)( \lambda_L=\frac{1}{2}~\text{or}~1) ,$$
defined over the domain $X_L$ with respect to the measure $\mu_L$.

Fix an positive integer $N$.  Let $\varphi_L = \sum\limits_{i = 1}^N\lambda_i\chi_{I_{L}^{(i)}}$ be a step function on $X_L$, where $I_{L}^{(1)},\cdots,I_{L}^{(N)}$ are disjoint bounded Borel subsets of $X_L.$ Denote $$I_L=\mathrm{supp} \varphi_L, $$ and define the following integral operators with their respective kernels:
$$A_L:=\text{operator with kernel } \chi_{I_L}(x)a_L(x,y)\chi_{I_L}(y),$$
$$B_L:=\text{operator with kernel } \chi_{I_L}(x)b_L(x,y)\chi_{I_L}(y),$$
$$D_L:=\text{operator with kernel } \chi_{I_L}(x)d_L(x,y)\chi_{I_L}(y),$$
$$A^{\dag}_L:=\text{operator with kernel } \chi_{I_L}(x)a_L(y,x)\chi_{I_L}(y).$$
We require the following assumption:
\begin{assumption}\label{assump-CLT-stepfunc}
$\,$
\begin{itemize}
\item[(i)] $\mathrm{Var}_{\mathbb{P}_L} (S_{\varphi_L}) \rightarrow +\infty$ as $L\rightarrow +\infty;$
\item[(ii)] $A_L$ belong to trace class. Furthermore,  there exists $\delta>0$ such that for $i=1,\cdots,N,$
	\begin{align*}
		\big\|\chi_{I_{L}^{(i)}}A_L\chi_{I_{L}^{(i)}}-(\chi_{I_{L}^{(i)}}A_L\chi_{I_{L}^{(i)}})^2\big\|_1= O\big(\mathrm{Var}_{\mathbb{P}_L}(S_{\varphi_L})\big)^{\delta}\quad\mathrm{as}\quad L\rightarrow+\infty,
	\end{align*}
	and for any pair  $i,j\in\{1,\cdots,N\} $ with $i\neq j,$
	\begin{align*}
		 \big\|\chi_{I_{L}^{(i)}}A_L\chi_{I_{L}^{(j)}}A_L^{*}\chi_{I_{L}^{(i)}}\big\|_1 = O\big(\mathrm{Var}_{\mathbb{P}_L}(S_{\varphi_L})\big)^{\delta}\quad\mathrm{as}\quad L\rightarrow+\infty;
	\end{align*}
\item[(iii)] $A_L,B_L,D_L,A^{\dag}_L$ are bounded operators and $$\sup\limits_L (\|A_L\|+\|A^{\dag}_L\|)<+\infty;$$
\item[(iv)] There exists  an integer $M$   such that for any $i=1,\cdots,N$, $A_L^{\dag}\chi_{I_{L}^{(i)}} B_L-B_L \chi_{I_{L}^{(i)}}A_L$ and $D_L\chi_{I_{L}^{(i)}}B_t-A_L\chi_{I_{L}^{(i)}}A_L$ are finite rank operators of rank at most $M$; that is,
\begin{align*}
	&A_L^{\dag}\chi_{I_{L}^{(i)}} B_L-B_L \chi_{I_{L}^{(i)}}A_L = \sum\limits_{j=1}^{M} f_{L}^{(i,j)}\otimes g_L^{(i,j)},\\
	&D_L\chi_{I_{L}^{(i)}}B_t-(\alpha_L A L\chi_{I_{L}^{(i)}}+\beta_L A_L\chi_{I_{L}^{(i)}}A_L) = \sum\limits_{j=1}^{M} h_L^{(i,j)}\otimes e_L^{(i,j)},
\end{align*}
where 
 $f^{(i,j)}_{L},g_L^{(i,j)},h_L^{(i,j)},e_L^{(i,j)}\in L^2(X_L)$ for $i=1,\cdots,N,$ $j=1,\cdots,M$, and $\alpha,\beta\in\mathbb{C}$ with $\alpha_L +\beta_L =\begin{cases}
1, &  \mathrm{if}\, \lambda=\frac{1}{2};\\
0, &  \mathrm{if}\,\lambda=1.
\end{cases} $
and $\sup\limits_L |\alpha_L|+|\beta_L|<\infty;$
\item[(v)] For any $m,m^{\prime}\in \mathbb{N}_{+},$ $m\geqslant2,$  $i,i^{\prime},i_1,\cdots,i_m,j_1,\cdots,j_{m^{\prime}}\in\{1,\cdots,N\}\,$ and $j,j^{\prime}\in\{1,\cdots,M\},$ the following asymptotics hold:
\begin{align*}
	&\langle \chi_{I_L^{(i_1)}} D_L\chi_{I_L^{(i_2)}}A_L^{\dag}\chi_{I_L^{(i_3)}}\cdots A_L^{\dag}\chi_{I_L^{(i_m)}} f_L^{(i,j)},\quad \chi_{I_L^{(j_1)}}A_L\chi_{I_L^{(j_2)}}\cdots A_L\chi_{I_L^{( j_{m^{\prime}} )}} g_L^{(i^{\prime},j^{\prime})}\rangle,\\
   &\langle \chi_{I_L^{(i_1)}} D_L\chi_{I_L^{(i_2)}}A_L^{\dag}\chi_{I_L^{(i_3)}}\cdots A_L^{\dag}\chi_{I_L^{(i_m)}} f_L^{(i,j)},\quad \chi_{I_L^{(j_1)}}A_L\chi_{I_L^{(j_2)}}\cdots A_L\chi_{I_L^{( j_{m^{\prime}} )}} e_L^{(i^{\prime},j^{\prime})}\rangle,\\
   &\langle \chi_{I_L^{(i_1)}} h_L^{(i,j)},\quad \chi_{I_L^{(j_1)}}A_L\chi_{I_L^{(j_2)}}\cdots A_L\chi_{I_L^{( j_{m^{\prime}} )}} g_L^{(i^{\prime},j^{\prime})}\rangle,\\
   &\langle \chi_{I_L^{(i_1)}} h_L^{(i,j)},\quad \chi_{I_L^{(j_1)}}A_L\chi_{I_L^{(j_2)}}\cdots A_L\chi_{I_L^{( j_{m^{\prime}} )}} e_L^{(i^{\prime},j^{\prime})}\rangle\\
   &\quad =o\big(\mathrm{Var}_{\mathbb{P}_L}(S_{\varphi_L})\big), \quad\mathrm{as}\quad L\rightarrow+\infty.
\end{align*}	
\end{itemize}

\end{assumption}

\begin{theorem}\label{Th-step-func} For $L\geqslant  0$, let $\{\mathbb{P}_L\}_{L\geqslant 0}$ be a family of Pfaffian point processes with the matrix-valued kernel function $\mathbb{K}_L(x,y)=ZK_L(x,y) $
over the domain $X_L$ with respect to the measure $\mu_L$.
Suppose that $\{\varphi_L = \sum\limits_{i = 1}^N\lambda_i\chi_{I_{L}^{(i)}}\}_{L\geqslant0}$ satisfy Assumption~\ref{assump-CLT-stepfunc}, then
\begin{align*}
	\frac{S_{\varphi_L} - \mathbb{E}_{\mathbb{P}_L} [S_{\varphi_L}] }{\sqrt{\mathrm{Var}_{\mathbb{P}_L}  (S_{\varphi_L})}}
\end{align*}				
converges in distribution to the standard normal law $N(0,1)$ as $L\rightarrow +\infty$.
\end{theorem}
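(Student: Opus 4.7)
The plan is to follow the same cumulant-method blueprint used in the proof of Theorem~\ref{Th2}, adapted to accommodate the multi-component structure of the step function $\varphi_L = \sum_{i=1}^{N}\lambda_i\chi_{I_L^{(i)}}$. By Lemma~\ref{CLT-and-cn} it suffices to exhibit an $n_0$ such that, for every $n\geqslant n_0$,
\begin{align*}
    c_n(S_{\varphi_L}) = o\!\left(\mathrm{Var}_{\mathbb{P}_L}(S_{\varphi_L})\right)^{n/2} \quad\text{as } L\to+\infty.
\end{align*}
First I would apply Proposition~\ref{lemma-cumulants-Pfaffian} to obtain $c_n(S_{\varphi_L})$ as a sum of integrals of $\varphi_L^{l_1}(x_1)\cdots\varphi_L^{l_k}(x_k)\,\mathrm{Tr}\!\left(K_L(x_1,x_2)\cdots K_L(x_k,x_1)\right)$. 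Since the sets $I_L^{(1)},\dots,I_L^{(N)}$ are disjoint, the product $\varphi_L^{l_s}(x_s)$ expands as $\sum_{i_s}\lambda_{i_s}^{l_s}\chi_{I_L^{(i_s)}}(x_s)$, so the cumulant is a finite linear combination of trace expressions of the form $\mathrm{Tr}\!\left(\chi_{I_L^{(i_1)}}K_L\chi_{I_L^{(i_2)}}K_L\cdots \chi_{I_L^{(i_k)}}K_L\right)$, with coefficients independent of $L$.

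Next, I would prove the step-function analogue of Lemma~\ref{claim-of-tr-inconsistent}. Expanding each $K_L$ into its $2\times 2$ matrix entries and invoking Lemma~\ref{identity} (applied to the weighted kernel $\chi_{I_L^{(i)}}K_L\chi_{I_L^{(j)}}$ after suitable permutation of indices), each summand reduces to a cyclic trace of alternating blocks $A_L$, $A_L^{\dagger}$, $B_L$, $D_L$ separated by characteristic functions $\chi_{I_L^{(i)}}$. The commutator hypothesis~(iv) of Assumption~\ref{assump-CLT-stepfunc} now plays the role that the FRCP did in the counting case: it allows me to push every $B_L$ through the string of $A_L^{\dagger}$'s, paying only finite-rank corrections $f_L^{(i,j)}\otimes g_L^{(i,j)}$ and $h_L^{(i,j)}\otimes e_L^{(i,j)}$. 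After doing so, the trace decomposes as a leading term of the form $\lambda_L\mathrm{Tr}\!\left(\chi_{I_L^{(i_1)}}A_L\chi_{I_L^{(i_2)}}A_L\cdots\chi_{I_L^{(i_k)}}A_L\right)$, plus terms of the type $\mathrm{Tr}\!\left(\text{words in }\chi_{I_L^{(i)}}A_L\right)$ containing at least one factor $\chi_{I_L^{(i)}}A_L\chi_{I_L^{(i)}}-(\chi_{I_L^{(i)}}A_L\chi_{I_L^{(i)}})^2$ or a cross-term $\chi_{I_L^{(i)}}A_L\chi_{I_L^{(j)}}$ with $i\neq j$, plus finitely many cyclic traces of products of rank-one operators.

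The rank-one traces are then controlled by hypothesis~(v), giving the analogue of Lemma~\ref{claim-inner-prod-DAf-Ag}: each such trace is $o\!\left(\mathrm{Var}_{\mathbb{P}_L}(S_{\varphi_L})\right)^{t}$ where $t$ is the number of rank-one factors. The remaining error terms are estimated via H\"older's inequality for Schatten norms together with hypothesis~(ii): the diagonal pieces are bounded using $\|\chi_{I_L^{(i)}}A_L\chi_{I_L^{(i)}}-(\chi_{I_L^{(i)}}A_L\chi_{I_L^{(i)}})^2\|_1 = O(\mathrm{Var}_{\mathbb{P}_L}(S_{\varphi_L}))^{\delta}$, while off-diagonal pieces $\chi_{I_L^{(i)}}A_L\chi_{I_L^{(j)}}$ for $i\neq j$ are handled through $\|\chi_{I_L^{(i)}}A_L\chi_{I_L^{(j)}}A_L^{*}\chi_{I_L^{(i)}}\|_1 = O(\mathrm{Var}_{\mathbb{P}_L}(S_{\varphi_L}))^{\delta}$, with operator-norm bounds from~(iii) absorbing intermediate factors. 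Combining these estimates with the recursion~\eqref{eq-cn-and-Vk} adapted to the step-function setting, $c_n(S_{\varphi_L})$ reduces to the variance term plus negligible remainders, yielding the required $o(\mathrm{Var}_{\mathbb{P}_L}(S_{\varphi_L}))^{n/2}$ bound for $n>\max\{2\delta,3\}$.

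The main obstacle I anticipate is the bookkeeping for the off-diagonal contributions: when the string of characteristic functions oscillates between different $I_L^{(i)}$'s, the commutator identity in~(iv) only handles one insertion at a time, so one must carefully iterate the identity and track the many residual terms generated. The key technical lemma to make this clean is a version of Lemma~\ref{lemma-an-easy-condition-for-Th} in which the test vectors $f_L^{(i,j)},g_L^{(i,j)},h_L^{(i,j)},e_L^{(i,j)}$ admit a two-part decomposition, a constant-on-$I_L^{(i)}$ piece plus an $L^2$-small piece; this is exactly the mechanism used for the Pfaffian $\mathrm{Sine}_4$ and $\mathrm{Sine}_1$ processes in Subsections~\ref{CLT-for-sine4} and~3.2, and it should transfer verbatim once Assumption~\ref{assump-CLT-stepfunc} is verified for indicator windows of the form $\chi_{(La_i,Lb_i)}$. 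Finally, Theorem~\ref{Th-sine-beta-step-func} is obtained by checking that the scaled indicator functions $\varphi_L(x)=\varphi(x/L)$ satisfy Assumption~\ref{assump-CLT-stepfunc} for both $\mathrm{Sine}_\beta$ processes with $\beta=1,4$, reusing the trace-class estimates and the integral identities for $S$ and $IS$ already established in the previous subsections.
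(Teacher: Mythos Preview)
Your proposal is correct and follows essentially the same route as the paper: cumulant formula via Proposition~\ref{lemma-cumulants-Pfaffian}, expansion over index strings $(i_1,\dots,i_k)$, a step-function analogue of Lemma~\ref{claim-of-tr-inconsistent} using Assumption~\ref{assump-CLT-stepfunc}(iv), rank-one trace control via~(v), and off-diagonal $A_L$-product control via~(ii) and~(iii) (the paper packages this last step as Lemma~\ref{claim-step-func-Tri_1A-i_kAi_1}). The only organizational difference is that the paper makes the diagonal/off-diagonal split explicit at the outset, writing $c_n(S_{\varphi_L})=\sum_i\lambda_i^n c_n(\#_{\mathbb P_L}I_L^{(i)})+\text{(mixed terms)}$ and invoking the proof of Theorem~\ref{Th2} verbatim for the diagonal summands; the recursion~\eqref{eq-cn-and-Vk} is applied only there, while each mixed integral is shown to be $o(\mathrm{Var})^{k/2}$ individually without any telescoping.
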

We begin the proof  by the following lemma.
\begin{lemma}\label{claim-step-func-Tri_1A-i_kAi_1}
Under the assumptions of Theorem \ref{Th-step-func}, for $k\geqslant2$ and indices $i_1,\cdots,i_k\in\{1,\cdots,N\}$ where at least one pair   $(i_s, i_{s^{\prime}})$  satisfies  $i_s\neq  i_{s^{\prime}}$,
 the following asymptotic bound holds:
\begin{align*}
	\mathrm{Tr}(\chi_{I_L^{(i_1)}}A_L\chi_{I_L^{(i_2)}}A_L\cdots \chi_{I_L^{(i_k)}}A_L\chi_{I_L^{(i_1)}}) = O\big(\mathrm{Var}_{\mathbb{P}_L}(S_{\varphi_L})\big)^{\frac{\delta}{2}}\quad\mathrm{as}\quad L\rightarrow+\infty.
\end{align*}
\end{lemma}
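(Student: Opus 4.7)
The plan is to combine cyclicity of the trace with H\"older's inequality for Schatten classes, isolating an ``off-diagonal'' block $\chi_{I_L^{(i)}} A_L \chi_{I_L^{(j)}}$ with $i \neq j$, whose Hilbert--Schmidt norm is small by Assumption~\ref{assump-CLT-stepfunc}(ii).

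Since the indices $i_1,\ldots,i_k$ are not all equal, by cyclicity of the trace we may assume $i_k \neq i_1$. Using $\chi_{I_L^{(i_j)}}^2 = \chi_{I_L^{(i_j)}}$, I rewrite
\begin{align*}
T_L := \mathrm{Tr}\bigl(\chi_{I_L^{(i_1)}} A_L \chi_{I_L^{(i_2)}} A_L \cdots \chi_{I_L^{(i_k)}} A_L \chi_{I_L^{(i_1)}}\bigr) = \mathrm{Tr}(Z_1 Z_2 \cdots Z_k),
\end{align*}
where $Z_j := \chi_{I_L^{(i_j)}} A_L \chi_{I_L^{(i_{j+1})}}$ (with $i_{k+1} := i_1$). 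Whenever $i_j \neq i_{j+1}$, Assumption~\ref{assump-CLT-stepfunc}(ii) yields
\begin{align*}
\|Z_j\|_2^2 \;=\; \bigl\|\chi_{I_L^{(i_j)}} A_L \chi_{I_L^{(i_{j+1})}} A_L^{\ast} \chi_{I_L^{(i_j)}}\bigr\|_1 \;=\; O\bigl(\mathrm{Var}_{\mathbb{P}_L}(S_{\varphi_L})\bigr)^{\delta},
\end{align*}
so $\|Z_j\|_2 = O\bigl(\mathrm{Var}_{\mathbb{P}_L}(S_{\varphi_L})\bigr)^{\delta/2}$, whereas for $i_j = i_{j+1}$ one only has the uniform operator-norm bound $\|Z_j\| = O(1)$ from Assumption~\ref{assump-CLT-stepfunc}(iii).

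The key step is to split the cyclic product at $Z_k$ and apply the trace Cauchy--Schwarz estimate $|\mathrm{Tr}(W Z_k)| \leq \|W\|_2\,\|Z_k\|_2$ with $W := Z_1 Z_2 \cdots Z_{k-1}$, then iterate the submultiplicative inequality $\|XY\|_2 \leq \|X\|\,\|Y\|_2$ along the factorization of $W$ to propagate the Hilbert--Schmidt smallness to a second off-diagonal factor inside $W$; such a factor is guaranteed to exist because the number of distinct-index transitions in a cyclic word is always even. When several consecutive diagonal blocks $P_i := \chi_{I_L^{(i)}} A_L \chi_{I_L^{(i)}}$ accumulate inside $W$, the near-projection estimate $\|P_i - P_i^2\|_1 = O\bigl(\mathrm{Var}_{\mathbb{P}_L}(S_{\varphi_L})\bigr)^{\delta}$ (first part of Assumption~\ref{assump-CLT-stepfunc}(ii)) can be used iteratively to collapse a long run of such blocks to a single projection up to an admissible trace-class error.

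The main obstacle is the control of $\|W\|_2$: since $A_L$ is only trace class with $\|A_L\|_1$ potentially growing much faster than any power of $\mathrm{Var}_{\mathbb{P}_L}(S_{\varphi_L})$, the naive interpolation bound $\|W\|_2 \leq \|W\|^{1/2}\,\|W\|_1^{1/2} = O\bigl(\|A_L\|_1^{1/2}\bigr)$ is far too weak. The argument therefore requires careful book-keeping so that only operator-norm factors multiply the small $\|\cdot\|_2$ of one off-diagonal block, after which the second off-diagonal factor and the near-projection reductions absorb the remaining surplus, yielding the asserted $O\bigl(\mathrm{Var}_{\mathbb{P}_L}(S_{\varphi_L})\bigr)^{\delta/2}$.
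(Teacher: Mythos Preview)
Your core strategy—find two off-diagonal blocks $Z_j=\chi_{I_L^{(i_j)}}A_L\chi_{I_L^{(i_{j+1})}}$ with $i_j\neq i_{j+1}$ and bound the trace by the product of their Hilbert--Schmidt norms times operator norms of the remaining factors—is exactly the paper's proof, and nothing more is needed.

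Two corrections. The parity claim is false: for $(i_1,i_2,i_3)=(1,2,3)$ the cyclic word has three transitions. What you actually need, and what is true, is that there are always at least \emph{two}: a single transition at position $j$ would force $i_1=\cdots=i_j$ and $i_{j+1}=\cdots=i_k=i_1$, hence $i_j=i_{j+1}$, a contradiction. Second, the near-projection estimate $\|P_i-P_i^2\|_1$ plays no role in this lemma, and there is no ``main obstacle''. Once a second off-diagonal factor $Z_l$ sits inside $W=Z_1\cdots Z_{k-1}$, the elementary bound $\|ABC\|_2\leq\|A\|\,\|B\|_2\,\|C\|$ gives immediately
\[
\|W\|_2\;\leq\;\Bigl(\prod_{m\neq l,\,1\leq m\leq k-1}\|Z_m\|\Bigr)\|Z_l\|_2\;\leq\;\bigl(\sup_L\|A_L\|\bigr)^{k-2}\,\|Z_l\|_2\;=\;O\bigl(\mathrm{Var}_{\mathbb{P}_L}(S_{\varphi_L})\bigr)^{\delta/2},
\]
so $|T_L|\leq\|W\|_2\|Z_k\|_2=O\bigl(\mathrm{Var}_{\mathbb{P}_L}(S_{\varphi_L})\bigr)^{\delta}$, which is even stronger than the stated bound. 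No collapsing of diagonal runs, no interpolation, no further book-keeping is required; delete the last two paragraphs of your sketch and you have the complete argument.
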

\begin{proof} From Assumption  \ref{assump-CLT-stepfunc}(ii),
for any pair  $i,j\in\{1,\cdots,N\} $ with $i\neq j,$ we have that as $L\rightarrow+\infty$,
	\begin{align*} \|\chi_{I_{L}^{(i)}}A_L\chi_{I_{L}^{(j)}}\|_2\leqslant
		 \big\|\chi_{I_{L}^{(i)}}A_L\chi_{I_{L}^{(j)}}A_L^{*}\chi_{I_{L}^{(i)}}\big\|^{\frac{1}{2}}_1 = O\big(\mathrm{Var}_{\mathbb{P}_L}(S_{\varphi_L})\big)^{\frac{\delta}{2}}.
	\end{align*}
By the assumptions of the lemma, there exist two distinct indices $j,j'$ such that
$$i_{j}\neq i_{j+1} \,\,\text {and }\,\,i_{j'}\neq i_{j'+1},$$
 where we adopt the convention that $i_{k+1}=i_1$. Therefore,%
%
\begin{align}\label{eq-TrI_1AAAAI_kA}
	&\quad ~\big|\mathrm{Tr}(\chi_{I_L^{(i_1)}}A_L\chi_{I_L^{(i_2)}}A_L\cdots \chi_{I_L^{(i_k)}}A_L\chi_{I_L^{(i_1)}})\big|\nonumber\\
	 & \leqslant \|\chi_{I_L^{(i_j)}}A_L\chi_{I_L^{(i_{j+1})}}\|_2 \|\chi_{I_L^{(i_{j'})}}A_L\chi_{I_L^{(i_{j'+1})}}\|_2
	  \|A_L\|^{k-1} \nonumber\\
&  =O\big(\mathrm{Var}_{\mathbb{P}_L}(S_{\varphi_L})\big)^{ {\delta} }
\end{align} as  $L\rightarrow+\infty$, completing the proof.
\end{proof}

We now turn to the proof of Theorem \ref{Th-step-func}.

\noindent{\bf{The proof of Theorem \ref{Th-step-func}. }}We employ the method of moments to establish the result. The argument follows closely the proof strategy of Theorem~\ref{Th2}, and therefore we provide only a concise outline of the main arguments.

By Lemma \ref{cumulants},  it  suffices to prove the existence of   a positive integer $n_0 $ such that for all integer $ n\geqslant n_0$,
 the following asymptotic estimate holds:
\begin{align*}
	c_n(S_{\varphi_L})= o\big( \mathrm{Var}_{\mathbb{P}_{L}} (S_{\varphi_L})\big)^{\frac{n}{2}} \quad\mathrm{as}\quad L\rightarrow+\infty.
\end{align*}
Applying Proposition~\ref{lemma-cumulants-Pfaffian} to the step function$\eta=\varphi_L = \sum\limits_{i = 1}^N\lambda_i\chi_{I_{L}^{(i)}}$, we obtain the following cumulant estimate.
For any positive integer $n,$ we have that
\begin{align}\label{cn}
	c_n(S_{\varphi_L})
	 = \sum\limits_{i=1}^N\lambda_i^n c_n(\#_{\mathbb{P}_L}I_{L}^{(i)})
	 + \sum_{\substack{2\leqslant k\leqslant n,\\
			i_1,\cdots,i_k\in \{1,\cdots,N\},\\
			\exists i_s,i_{s^\prime},~i_s\neq i_{s^\prime} }}
	C_{i_1,\cdots,i_k}
	\int_{I_L^{(i_1)}\times I_L^{(i_2)}\cdots\times I_L^{(i_k)}}\nonumber	\\
	 \mathrm{Tr}\big(K_L(x_1,x_2) \cdots K_L(x_k,x_1)\big)  d\mu_L(x_1)d\mu_L(x_2)\cdots d\mu_L(x_k),
\end{align}	
where $C_{i_1,\cdots,i_k}$ are constants related to $i_1,\cdots,i_k.$
By the argument of  Theorem~\ref{Th2}, for any $i=1,\cdots,N $  and integer $n >\max\{2\delta,3\},$
\begin{align}\label{estimate-c_n-Ii}
	c_n(\#_{\mathbb{P}_L}I_{L}^{(i)})= o\big( \mathrm{Var}_{\mathbb{P}_{L}} (S_{\varphi_L})\big)^{\frac{n}{2}} \quad\mathrm{as}\quad L\rightarrow+\infty.
\end{align}
Using  the similar  argument in Lemma \ref{claim-of-tr-inconsistent}, we have that for $n \geqslant2,$
	\begin{align*}
		&\quad\frac{1}{2}\int_{I_L^{(i_1)}\times I_L^{(i_2)}\cdots\times I_L^{(i_k)}} \mathrm{Tr}\big(K_L(x_1,x_2)K_L(x_2,x_3)\cdots K_L(x_k,x_1)\big) d\mu_L(x_1)d\mu_L(x_2)\cdots d\mu_L(x_k)\\
		&=\sum\limits_{(j_1,\cdots,j_m)\in J(i_1,\cdots,i_k)} C^L_{j_1,\cdots,j_m} \mathrm{Tr}(\chi_{I_L^{(j_1)}}A_L\cdots \chi_{I_L^{(j_m)}}A_L\chi_{I_L^{(j_1)}})+ \sum\limits_{l\leqslant \frac{k}{2}} C_{Q_{1,L},\cdots, Q_{l,L}}
		\mathrm{Tr}\big(Q_{1,L}\cdots Q_{l,L}\big),
	\end{align*}
where $J$ consists of all sequences obtained by optionally merging consecutive identical terms in the original sequence $(i_1,\cdots,i_k)$,  $C^L_{j_1,\cdots,j_m} $ are constants related to $\alpha_L$ and $\beta_L$,  $C_{Q_{1,L},Q_{2,L},\cdots, Q_{l,L}}$ are constants related to $i_1,\cdots,i_k,$ and each $Q_{i,L}$ is a rank-1 operator expressed of the following forms:
\begin{align*}
	&\big(\chi_{I_L^{(i_1)}} D_L\chi_{I_L^{(i_2)}}A_L^{\dag}\chi_{I_L^{(i_3)}}\cdots A_L^{\dag}\chi_{I_L^{(i_m)}} f_L^{(i,j)}\big)
	\otimes\big(\chi_{I_L^{(j_1)}}A_L\chi_{I_L^{(j_2)}}\cdots A_L\chi_{I_L^{( j_{m^{\prime}} )}} g_L^{(i^{\prime},j^{\prime})}\big),\nonumber\\
	&\mathrm{or}\quad\big(\chi_{I_L^{(i_1)}} h_L^{(i,j)}\big)
	\otimes \big(\chi_{I_L^{(j_1)}}A_L\chi_{I_L^{(j_2)}}\cdots A_L\chi_{I_L^{( j_{m^{\prime}} )}} e_L^{(i^{\prime},j^{\prime})}\big) \nonumber
\end{align*}
for some integers
   $m,m^{\prime} $ with $m\geqslant2,$  $i,i^{\prime},i_1,\cdots,i_m,j_1,\cdots,j_{m^{\prime}}\in \{1,\cdots,N\},$ and $j,j^{\prime}\in\{1,\cdots,M\}$.

Following the argument in Lemma~\ref{claim-inner-prod-DAf-Ag}, each product term
$Q_{1,L} Q_{2,L} \cdots  Q_{l,L}$ satisfies the asymptotic estimate:
	\begin{align*}
		\mathrm{Tr}\big(Q_{1,L}Q_{2,L}\cdots Q_{k,L}\big) = o\big(\mathrm{Var}_{\mathbb{P}_L}(S_{\varphi_L})\big)^{\frac{k}{2}}\quad\mathrm{as}\quad L\rightarrow+\infty.
	\end{align*}
Consequently, we deduce the following:
\begin{align*}
	&\quad\frac{1}{2}\int_{I_L^{(i_1)}\times I_L^{(i_2)}\cdots\times I_L^{(i_k)}} \mathrm{Tr}\big(K_L(x_1,x_2)K_L(x_2,x_3)\cdots K_L(x_k,x_1)\big) d\mu_L(x_1)d\mu_L(x_2)\cdots d\mu_L(x_k)\\
	&=\sum\limits_{(j_1,\cdots,j_m)\in J(i_1,\cdots,i_k)} C^L_{j_1,\cdots,j_m} \mathrm{Tr}(\chi_{I_L^{(j_1)}}A_L\cdots \chi_{I_L^{(j_m)}}A_L\chi_{I_L^{(j_1)}})
	+ o\big( \mathrm{Var}_{\mathbb{P}_{t}} (S_{\varphi_L})\big)^{\frac{k}{2}}\\
&=o\big( \mathrm{Var}_{\mathbb{P}_{t}} (S_{\varphi_L})\big)^{\frac{k}{2}},
\end{align*}
where the final equality follows from Lemma \ref{claim-step-func-Tri_1A-i_kAi_1}.
Combining this result with equations     \eqref{cn} and \eqref{estimate-c_n-Ii},  , we obtain that for all integers $n> \max\{2\delta,3\},$ the cumulants satisfy the asymptotic estimate
\begin{align*}
	c_n(S_{\varphi_L}) = o\big( \mathrm{Var}_{\mathbb{P}_{L}} (S_{\varphi_L})\big)^{\frac{n}{2}}, \text{ as}~ L\rightarrow+\infty.
\end{align*}
 This completes the proof of the theorem.\hfill $\Box$

%

We now proceed to establish the central limit theorem for the Pfaffian  $\mathrm{sine}_4$ and  $\mathrm{sine}_1$ processes. Owing to the near-identical structure of the proofs for both cases, we restrict our exposition to the Pfaffian $\mathrm{sine}_4$ process, with the adaptation to the $\mathrm{sine}_1$ case being straightforward.

Let $\varphi:\mathbb R\to\mathbb R$ be a compactly supported test function of the form
$$\varphi= \sum\limits_{i=1}^N \lambda_i \chi_{(a_i,b_i)},$$
where $\lambda_i\in\mathbb{R}\setminus \{0\}$, $(a_i,b_i)$
  is a collection of pairwise disjoint intervals. For any scaling parameter  $L>0,$ define the rescaled function $\varphi_L(x) = \varphi(\frac{x}{L})$.

We adopt the notation $I^{(i)}=(a_i,b_i)$ and $I_L^{(i)}=(a_iL,b_iL)$ for $i=1,\cdots N,$ $L>0.$ Without loss of generality, we assume the intervals are ordered such that $$a_1<b_1\leqslant a_2<b_2\leqslant\cdots\leqslant a_N<b_N.$$

For each scaling parameter $L>0$ , we consider the integral operators $A_L=A_L^{\dag}, B_L, D_L$ acting on $L^2(I_L)$, where
$I_L=(a_1L,b_NL).$  These operators are defined through their respective integral kernels:
\begin{align*}
    &A_L(x,y) = A_L^{\dag}(x,y) =\chi_{I_L}(x)S(x-y)\chi_{I_L}(y),\\
    &B_L(x,y) = \chi_{I_L}(x)IS(x-y)\chi_{I_L}(y),\\
    &D_L(x,y) = \chi_{I_L}(x)S^{\prime}(x-y)\chi_{I_L}(y).
\end{align*}
As established in previous analysis, these operators satisfy the following bounds:
 $$0\leqslant A_L\leqslant1,\,\,\|D_L\|\leqslant \pi.$$

The following lemma  verifies Assumption~\ref{assump-CLT-stepfunc}(i) for the Pfaffian  ${\mathrm{Sine}_4}$ process.
\begin{lemma}\label{lemma-sine4-Exp-var-stepfunc}
For the Pfaffian ${\mathrm{Sine}_4}$-process, the expectation and variance of the linear statistic $S_{\varphi_L}$ satisfy the following asymptotic behavior
	\begin{align*}
		&\mathbb{E}_{\mathrm{Sine}_4}[S_{\varphi_L}] = \frac{1}{2}\sum\limits_{i=1}^N\lambda_i(b_i-a_i)L,\\
		&\mathrm{Var}_{\mathrm{Sine}_4} (S_{\varphi_L})=
		O(\log L), 	\text{ as }  L\rightarrow+\infty,
	\end{align*}
where $\varphi_L(x) = \varphi(\frac{x}{L})$ is the rescaled test function of $\varphi= \sum\limits_{i=1}^N \lambda_i \chi_{(a_i,b_i)}.$

\end{lemma}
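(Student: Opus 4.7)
The plan is to apply the general formulas \eqref{Math-Exp-For} and \eqref{Var-For} directly, expand $\varphi_L(x)\varphi_L(y)$ along the product of indicator functions, and then split the resulting double integrals into a diagonal contribution (indices $i=j$) that reduces to the single-interval variance already computed in the paper for $I_L=(-L,L)$, and an off-diagonal contribution ($i\neq j$) that I will show is $O(1)$ using the decay of $S$ and a single integration by parts.

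The expectation is immediate: since the diagonal entry of the $(1,1)$-block is $A(x,x)=\tfrac{1}{2}S(0)=\tfrac{1}{2}$, formula \eqref{Math-Exp-For} yields $\mathbb{E}_{\mathrm{Sine}_4}[S_{\varphi_L}]=\tfrac{1}{2}\int_{\mathbb{R}}\varphi_L(x)\,dx$, and changing variables (or summing directly over the disjoint intervals $(a_iL,b_iL)$) gives $\tfrac{1}{2}\sum_i\lambda_i(b_i-a_i)L$.

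For the variance, I plug $\det K_{\mathrm{Sine}_4}(x,y)=\tfrac{1}{4}[S^2(x-y)-S'(x-y)IS(x-y)]$ into \eqref{Var-For} and expand $\varphi_L(x)\varphi_L(y)=\sum_{i,j}\lambda_i\lambda_j\chi_{I_L^{(i)}}(x)\chi_{I_L^{(j)}}(y)$. Disjointness kills the cross terms in $\int\varphi_L^2$, so the only cross interactions occur in the double integral. For each diagonal block $i=j$, a translation reduces the integral over $I_L^{(i)}\times I_L^{(i)}$ to a single-interval of length $(b_i-a_i)L$, and the same computation performed in the paper for $(-L,L)$ (the identity $\int_0^\infty S=\tfrac{1}{2}$ together with the integration-by-parts identity $\int\!\!\int S'(x-y)IS(x-y)=IS(\ell)^2-\int\!\!\int S^2$) gives $\tfrac{\lambda_i^2}{2\pi^2}\log L+O(1)$.

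The main obstacle, although still routine, is the cross-term estimate. For $i\neq j$ I need to bound $\iint_{I_L^{(i)}\times I_L^{(j)}}S^2(x-y)\,dxdy$ and $\iint_{I_L^{(i)}\times I_L^{(j)}}S'(x-y)IS(x-y)\,dxdy$. The first is handled by $\sin^2\le 1$ and the direct computation $\iint 1/(\pi(x-y))^2\,dxdy=\tfrac{1}{\pi^2}\log\frac{(a_j-a_i)(b_j-b_i)}{(a_j-b_i)(b_j-a_i)}$, in which the $L$'s cancel, leaving $O(1)$. For the second I integrate by parts in $x$ using $\partial_x IS(x-y)=S(x-y)$ to write the integrand as $\partial_x\bigl[S(x-y)IS(x-y)\bigr]-S^2(x-y)$; the $S^2$ piece reduces to the first estimate, and the boundary pieces have the form $\int_{a_jL}^{b_jL}S(cL-y)IS(cL-y)\,dy$ for $c\in\{a_i,b_i\}$, which by $|S(u)|\le 1/(\pi|u|)$ and boundedness of $IS$ give $O\!\big(\int_{a_j L}^{b_j L}|cL-y|^{-1}dy\big)=O(1)$ since the interval has length $\asymp L$ at distance $\asymp L$ from $cL$ and the $L$'s again cancel inside the logarithm.

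Combining the diagonal and off-diagonal pieces yields $\mathrm{Var}_{\mathrm{Sine}_4}(S_{\varphi_L})=\sum_i\tfrac{\lambda_i^2}{2\pi^2}\log L+O(1)=O(\log L)$, as claimed. No new idea beyond the single-interval computation is needed; the only care required is bookkeeping the separation $(a_j-b_i)L$ between disjoint intervals when estimating the cross terms.
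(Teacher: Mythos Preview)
Your overall strategy matches the paper's, and the expectation and the diagonal variance contribution are handled correctly. The gap is in the off-diagonal estimate: you implicitly assume a strictly positive gap $a_j-b_i>0$ between the disjoint open intervals, but the paper's setup allows $b_i=a_j$ (adjacent intervals sharing an endpoint), since the ordering is $a_1<b_1\le a_2<\cdots$. In that case your bound $S^2(x-y)\le 1/(\pi(x-y))^2$ produces a divergent majorant (the factor $a_j-b_i$ in the denominator of your logarithm vanishes), and the integral $\int_{I_L^{(i)}\times I_L^{(j)}}S^2(x-y)\,dxdy$ is in fact $\tfrac{1}{2\pi^2}\log L+O(1)$, not $O(1)$. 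Likewise your boundary estimate $\int_{a_jL}^{b_jL}|cL-y|^{-1}\,dy=O(1)$ fails when $c=b_i=a_j$, because the interval is not at distance $\asymp L$ from $cL$.

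The paper handles precisely this adjacent case separately, via the identity
\[
\int_{I_L^{(i)}\times I_L^{(j)}}S^2
=\tfrac12\Big(\int_{(a_iL,b_jL)^2}-\int_{(a_iL,b_iL)^2}-\int_{(b_iL,b_jL)^2}\Big)S^2,
\]
reducing to three single-interval computations and extracting the $\tfrac{1}{2\pi^2}\log L$ contribution. The final variance then reads $\tfrac{1}{2\pi^2}\big(\sum_i\lambda_i^2-\sum_{b_i=a_j}\lambda_i\lambda_j\big)\log L+O(1)$, which is still $O(\log L)$, so the \emph{conclusion} of the lemma survives; but your asserted formula $\sum_i\tfrac{\lambda_i^2}{2\pi^2}\log L+O(1)$ and the claim that cross terms are $O(1)$ are both false in general. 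The fix is short: either treat the adjacent case by the splitting above, or simply note that $\int_{I_L^{(i)}\times I_L^{(j)}}S^2\le \int_{(I_L^{(i)}\cup I_L^{(j)})^2}S^2=O(L)$ combined with the diagonal estimate to get $O(\log L)$ for the cross term as well.
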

\begin{proof}
By equation \eqref{Math-Exp-For}, , we obtain the expectation:
\begin{align*}
	\mathbb{E}_{\mathrm{Sine}_4}[S_{\varphi_L}] = \frac{1}{2}\int_{\mathbb{R}} \varphi(x)S(0) dx =  \frac{1}{2}\sum\limits_{i=1}^N\lambda_i(b_i-a_i)L.
\end{align*}
For the variance calculation, applying  \eqref{Var-For} yields
\begin{align*}
	&\quad\mathrm{Var}_{\mathrm{Sine}_4} (S_{\varphi_L}) \\
	&=\frac{1}{2}\int_{\mathbb{R}}|\varphi_L(x)|^2S(0)dx-\int_{\mathbb{R}^2}\varphi_L(x)\overline{\varphi_L(y)}\mathrm{det}K_{{\mathrm{Sine}_4}}(x,y)dx dy\\
	& =\frac{1}{2}\int_{\mathbb{R}}|\varphi_L(x)|^2dx - \frac{1}{4}\int_{\mathbb{R}^2}\varphi_L(x)\overline{\varphi_L(y)}\big(S(x-y)^2-IS(x-y)S^{\prime}(x-y)\big)dxdy.
\end{align*}
Applying integration by parts to the cross-term in the variance expression yields
\begin{align*}
 &\quad\quad\int_{\mathbb{R}^2} \varphi_L(x)\overline{\varphi_L(y)} IS(x-y)S'(x-y) dxdy \\
 &= \sum_{i,j=1}^N \lambda_i\lambda_j \int_{I_L^{(i)} \times I_L^{(j)}} IS(x-y)S'(x-y) dxdy \\
& = -\int_{\mathbb{R}^2} \varphi_L(x)\overline{\varphi_L(y)} S(x-y)^2 dxdy
  + \frac{1}{2}\sum_{i,j=1}^N \lambda_i\lambda_j \Big[ IS((b_j-a_i)L)^2 + IS((a_j-b_i)L)^2
  \\&\quad\quad\quad\quad\quad\quad\quad\quad\quad\quad\quad\quad\quad\quad\quad\quad\quad\quad\quad   - IS((a_j-a_i)L)^2 - IS((b_j-b_i)L)^2 \Big].
\end{align*}

Given that  $IS\in L^{\infty}(\mathbb{R}),$    as $L $,  approaches infinity, the variance expression simplifies to
\begin{align}\label{substitute-Var-stepfunc}
	\mathrm{Var}_{\mathrm{Sine}_4} (S_{\varphi_L})
	=\frac{1}{2}\int_{\mathbb{R}}|\varphi_L(x)|^2dx - \frac{1}{2}\int_{\mathbb{R}^2}\varphi_L(x)\overline{\varphi_L(y)}S(x-y)^2dxdy+O(1).
\end{align}
The integrals evaluate as follows:
\begin{align}\label{v1}
	\int_{\mathbb{R}}|\varphi_L(x)|^2dx
	= \sum\limits_{i = 1}^N\lambda_i^2\int_{I_L^{(i)}} dx  =\sum\limits_{i=1}^N\lambda_i^2 (b_i-a_i)L,
\end{align}
and
\begin{align}\label{v2}
	&\quad\int_{\mathbb{R}^2}\varphi_L(x)\overline{\varphi_L(y)} S(x-y)^2 dxdy\nonumber\\
	&=\sum\limits_{i=1}^N\lambda_i^2\int_{I_L^{(i)\,2}} S(x-y)^2dxdy+2\sum\limits_{1\leqslant i<j\leqslant N}\lambda_i\lambda_j\int_{I_L^{(i)}\times I_L^{(j)}} S(x-y)^2dxdy.
\end{align}
Following \cite{Costin1995}, for any real numbers $a<b,$ as $L\rightarrow+\infty,$
\begin{align}\label{v3}
	\int_{(aL,bL)^2}S(x-y)^2dxdy = (b-a)L - \frac{1}{\pi^2}\log L +O(1).
\end{align}
It follows from \eqref{v3} that  in the case where $i<j$, $a_i<b_i=a_j<b_j,$
\begin{align}\label{v4}
	&\quad\int_{(a_iL,b_iL)\times (a_jL,b_jL)}S(x-y)^2dxdy \nonumber\\
&=\frac{1}{2}\int_{(a_iL,b_jL)^2}S(x-y)^2dxdy-\frac{1}{2}\int_{(a_iL,b_iL)^2}
S(x-y)^2dxdy-\frac{1}{2}\int_{(b_iL,b_jL)^2}S(x-y)^2dxdy \nonumber\\
	& =\frac{1}{2\pi^2}\log L +O(1)\quad\mathrm{as}\quad L\rightarrow +\infty.
\end{align}
For the case where $i<j$, $a_i<b_i<a_j<b_j,$
\begin{align}\label{v5}
	\int_{(a_iL,b_iL)\times (a_jL,b_jL)}S(x-y)^2dxdy
	&\leqslant\int_{(a_iL,b_iL)}\int_{(a_jL,b_jL)} \frac{1}{\big( \pi(a_j-b_i)L\big)^2} dydx \nonumber\\
	&=\frac{(b_i-a_i)(b_i-a_j)}{\pi^2(a_j-b_i)^2}=O(1).
\end{align}
Substituting these values \eqref{v1},\eqref{v2},\eqref{v3},\eqref{v4} and \eqref{v5}   into equation \eqref{substitute-Var-stepfunc}, we obtain the following result
\begin{align*}
	\mathrm{Var}_{\mathrm{Sine}_4} (S_{\varphi_L})
	&= \sum\limits_{i=1}^N \frac{\lambda_i^2}{2\pi^2}\log L -
	\sum_{\substack{1\leqslant i<j\leqslant N,\\
			b_i=a_j } }
	\frac{\lambda_i\lambda_j}{2\pi^2}\log L+O(1)\\
	&=O(Log L),
\end{align*}
completing the proof.
\end{proof}
We conclude by proving Theorem  \ref{Th-sine-beta-step-func}.

\noindent{\bf{The proof of Theorem \ref{Th-sine-beta-step-func}.}}
From \cite{Costin1995, Soshnikov2000fluctuations}, we obtain the asymptotic estimate as  $L\to+\infty,$
\begin{align*}
	\big\|\chi_{I_{L}^{(i)}}A_L\chi_{I_{L}^{(i)}}-(\chi_{I_{L}^{(i)}}A_L\chi_{I_{L}^{(i)}})^2\big\|_1= O(\log L)\quad\mathrm{as}\quad L\rightarrow+\infty,
\end{align*}
Furthermore, for any distinct $i,j\in\{1,\cdots,N\},$   by \eqref{v4} and \eqref{v5}
\begin{align*}
	\big\|\chi_{I_{L}^{(i)}}A_L\chi_{I_{L}^{(j)}}A_L^{*}\chi_{I_{L}^{(i)}}\big\|_1
	&=\int_{(a_iL,b_iL)\times (a_jL,b_jL)}S(x-y)^2dxdy\\
	&= O(\log L)\quad\mathrm{as}\quad L\rightarrow+\infty.
\end{align*}
Thus, Assumption~\ref{assump-CLT-stepfunc} (ii) is satisfied.

Analogous to Proposition \ref{FRCPsin4}, a direct computation reveals that $A_L \chi_{I_L^{(i)}}B_L -B_L \chi_{I_L^{(i)}}A_L$, $D_L\chi_{I_L^{(i)}}B_L -A_L \chi_{I_L^{(i)}} A_L$
 are integral operators with the following kernels:
\begin{align*}
	& \quad (A_L\chi_{I_L^{(i)}}B_L-B_L\chi_{I_L^{(i)}}A_L)(x,y)\\
	& =\chi_{(a_1L,b_NL) }(x) \chi_{(a_1L,b_NL) }(y)\int_{I_L^{(i)}} \big(S(x-z)IS(z-y)-IS(x-z)S(z-y)\big) dz \\
	  &= \big(IS(x-b_iL)IS(y-b_iL)-IS(x-a_iL)IS(y-a_iL)\big)\chi_{(a_1L,b_NL)}(x)\chi_{(a_1L,b_NL)}(y) ,
\end{align*}
and
\begin{align*}
	&\quad (D_L\chi_{I_L^{(i)}}B_L-A_L\chi_{I_L^{(i)}}A_L)(x,y)\\
	& =\chi_{(a_1L,b_NL) }(x) \chi_{(a_1L,b_NL) }(y) \int_{I_L} \big(S^{\prime}(x-z)IS(z-y)-S(x-z)S(z-y)\big) dz  \\
	& =\big(S(x-b_iL)IS(y-b_iL)-S(x-a_iL)IS(y-a_iL)\big)\chi_{(a_1L,b_NL)}(x)\chi_{(a_1L,b_NL)}(y).
\end{align*}
These identities confirm Assumption~\ref{assump-CLT-stepfunc} (iv).

By Lemma   \ref{lemma-2-norm-of-IS-1/2} and using the following decomposition
\begin{align*}
	&\quad IS(x-a_iL)\chi_{(a_1L,b_NL)}(x) \\
	&= (IS(x-a_iL)-\frac{1}{2})\chi_{(a_iL,b_NL)} -  (IS(a_iL-x)-\frac{1}{2})\chi_{(a_1L,a_iL)}
	+\frac{1}{2}\chi_{(a_iL,b_NL)}-\frac{1}{2}\chi_{(a_1L,a_iL)},
\end{align*}
an argument similar to that in Lemmas \ref{lemma-an-easy-condition-for-Th} verifies Assumption \ref{assump-CLT-stepfunc} (v).

By applying Theorem~\ref{Th-step-func}, we establish a central limit theorem for step functions in the Pfafian $\mathrm{Sine}_4$-process.
The corresponding result for the Pfaffian $\mathrm{Sine}_1$ process follows analogously, thereby completing the proof.

\end{document}